\providecommand{\U}[1]{\protect\rule{.1in}{.1in}}
\DeclareMathOperator{\diag}{diag}
\newtheorem{theorem}{Theorem}
\newtheorem{corollary}[theorem]{Corollary}
\newtheorem{remark}[theorem]{Remark}
\newenvironment{proof}[1][Proof]{\noindent\textbf{#1.} }{\ \rule{0.5em}{0.5em}}
\DeclareMathOperator{\E}{E}
\DeclareMathOperator{\mujvec}{vec}
\begin{document}

\title{\MakeUppercase{Multilevel maximum likelihood estimation with application to covariance matrices}}
\author{Marie Tur\v{c}i\v{c}ov\'a$^{\ast}$
\and Jan Mandel$^{\dagger}$
\and Kry\v{s}tof Eben$^{\ddagger}$ }
\date{\vspace{-4ex}}
\maketitle

\noindent$^{\ast}$ Institute of Computer Science, Academy of Sciences of the
Czech Republic\newline Pod Vod\'{a}renskou v\v{e}\v{z}\'{\i} 271/2, 182 07
Praha 8, Czech Republic, and \newline Charles University in Prague, Faculty of
Mathematics and Physics\newline Sokolovsk\'{a} 83, Prague 8, 186 75, Czech
Republic\newline turcicova@cs.cas.cz \vskip3mm \noindent$^{\dagger}$
University of Colorado Denver, Denver, CO 80217-3364, USA, and \newline
Institute of Computer Science, Academy of Sciences of the Czech
Republic\newline Pod Vod\'{a}renskou v\v{e}\v{z}\'{\i} 271/2, 182 07 Praha 8,
Czech Republic \newline Jan.Mandel@ucdenver.edu \vskip3mm \noindent
$^{\ddagger}$ Institute of Computer Science, Academy of Sciences of the Czech
Republic \newline Pod Vod\'{a}renskou v\v{e}\v{z}\'{\i} 271/2, 182 07 Praha 8,
Czech Republic \newline eben@cs.cas.cz \vskip3mm

\noindent Key Words: hierarchical maximum likelihood; nested parameter spaces;
spectral diagonal covariance model; sparse inverse covariance model;  Fisher
information; high dimension.

\subsection*{ABSTRACT}

The asymptotic variance of the maximum likelihood estimate is proved to
decrease when the maximization is restricted to a subspace that contains the
true parameter value. Maximum likelihood estimation allows a systematic
fitting of covariance models to the sample, which is important in data
assimilation. The hierarchical maximum likelihood approach is applied to the
spectral diagonal covariance model with different parameterizations of
eigenvalue decay, and to the sparse inverse covariance model with specified
parameter values on different sets of nonzero entries. It is shown
computationally that using smaller sets of parameters can decrease the
sampling noise in high dimension substantially.

\section{INTRODUCTION}

Estimation of large covariance matrices from small samples is an important
problem in many fields, including spatial statistics, genomics, and ensemble
filtering. One of the prominent applications is data assimilation in
meteorology and oceanography, where the dimension of state vector describing
the atmosphere or ocean is in order of millions or larger. Every practically
available sample is a small sample in this context, since a reasonable
approximation of the full covariance can be obtained only with sample size of
the order of the dimension of the problem
\citep{Vershynin-2012-HCS}%
. In practice, the sample covariance\footnote{In this paper, by sample
covariance we mean the maximum likelihood estimate of covariance matrix using
the norming constant $N$ as opposed to the unbiased estimate with norming
constant $(N-1).$} is singular and polluted by spurious correlations.
Nevertheless, it carries useful information (e.g. on covariances present in
the actual atmospheric flow) and different techniques can be applied in order
to improve the covariance model and its practical performance.

One common technique is shrinkage, that is, a linear combination of sample
covariance and a~positive definite target matrix, which prevents the
covariance from being singular. The target matrix embodies some prior
information about the covariance; it can be, e.g., unit diagonal or, more
generally, positive diagonal
\citep{Ledoit-2004-WEL}%
. See, e.g., \cite{Schafer-2005-SAL} for a survey of such shrinkage
approaches. Shrinkage of sample covariance towards a fixed covariance matrix
based on a specific model and estimated from historical data (called
background covariance) was used successfully in meteorology
\citep{Hamill-2000-HEK,Wang-2008-HE3}%
. This approach is justified as one which combines actual (called
flow-dependent) and long-term average (called climatologic) information on
spatial covariances present in the 3D meteorological fields.

Another approach to improving on the sample covariance matrix is localization
by suppressing long-range spurious correlations, which is commonly done by
multiplying the sample covariance matrix term by term by a gradual cutoff
matrix
\citep{Buehner-2007-SSL,Furrer-2007-EHP}
to suppress off-diagonal entries. The extreme case, when only the diagonal is
left, is particularly advantageous in the spectral domain, as the covariance
of a random field in Fourier space is diagonal if and only if the random field
in cartesian geometry is second order stationary, i.e., the covariance between
the values at two points depends only on their distance vector. Alternatively,
diagonal covariance in a wavelet basis provides spatial variability as well
\citep{Pannekoucke-2007-FPW}%
. Spectral diagonal covariance models were successfully used in operational
statistical interpolation in meteorology in spherical geometry
\citep{Parrish-1992-NMC}%
, and versions of Ensemble Kalman Filter (EnKF) were developed which construct
diagonal covariance in Fourier or wavelet space in every update step of the
filter at low cost, and can operate successfully with small ensembles
\citep{Mandel-2010-FFT,Beezley-2011-WEK,Kasanicky-2015-SDE}%
.

Sparse covariance models, such as the spectral diagonal, allow a compromise
between realistic assumptions and cheap computations. Another covariance model
taking advantage of sparsity is a Gauss-Markov Random Field (GMRF), based on
the fact that conditional independence of variables implies zero corresponding
elements in the inverse of the covariance matrix
\citep{rue2005gaussian}%
, which leads to modeling the covariance as the inverse of a sparse matrix.

However, both spectral diagonal and sparse inverse covariance models have a
large number of parameters, namely all terms of the sparse matrix (up to
symmetry) which are allowed to attain nonzero values. This results in
overfitting and significant sampling noise for small samples. Therefore, it is
of interest to reduce the number of parameters by adopting additional,
problem-dependent assumptions on the true parameter values.

The principal result of this paper is the observation that if parameters are
fitted as the Maximum Likelihood Estimator (MLE) and the additional
assumptions are satisfied by the true parameters, then the estimate using
fewer parameters is asymptotically more accurate, and often very significantly
so even for small samples..

The paper is organized as follows. In Sec.~\ref{sec:MLE}, we provide a brief
statement of MLE and its asymptotic variance. In Sec.~\ref{sec:nested}, we use
the theory of maximum likelihood estimation to prove that for any two nested
subspaces of the parametric space containing the true parameter, the
asymptotic covariance matrix of the MLE is smaller for the smaller parameter
space. These results hold for a general parameter and, in the special case of
MLE for covariance matrices we do not need any invertibility assumption. The
applications to estimation of covariance matrices by spectral diagonal and
GMRF are presented in Sec.~\ref{sec:application}, and Sec.~\ref{sec:comp}
contains computational illustrations. A comparison of the performance of MLE
for parametric models and of related shrinkage estimators is in
Sec.~\ref{sec:shrinkage}.

\section{ASYMPTOTIC VARIANCE OF THE MAXIMUM LIKELIHOOD ESTIMATOR}

\label{sec:MLE}

First, we briefly review some standard results for reference. Suppose
$\mathbb{X}_{N}=\left[  \boldsymbol{X}_{1},\ldots,\boldsymbol{X}_{N}\right]  $
is a random sample from a distribution on $\mathbb{R}^{n}$ with density
$f\left(  \boldsymbol{x},\boldsymbol{\theta}\right)  $ with unknown parameter
vector $\boldsymbol{\theta}$ in a parameter space $\Theta\subset\mathbb{R}%
^{p}$. The maximum likelihood estimate $\boldsymbol{\hat{\theta}}_{N}$ of the
true parameter $\boldsymbol{\theta}^{0}$ is defined by maximizing the
likelihood
\[
\boldsymbol{\hat{\theta}}_{N}=\arg\max_{\boldsymbol{\theta}}\mathcal{L}\left(
\boldsymbol{\theta}|\mathbb{X}_{N}\right)  ,\quad\mathcal{L}\left(
\boldsymbol{\theta}|\mathbb{X}_{N}\right)  =\prod\limits_{i=1}^{N}%
\mathcal{L}\left(  \boldsymbol{\theta}|\boldsymbol{X}_{i}\right)
,\quad\mathcal{L}\left(  \boldsymbol{\theta}|\boldsymbol{x}\right)  =f\left(
\boldsymbol{x},\boldsymbol{\theta}\right)  ,
\]
or, equivalently, maximizing the log likelihood%
\begin{equation}
\boldsymbol{\hat{\theta}}_{N}=\arg\max_{\boldsymbol{\theta}}\ell\left(
\boldsymbol{\theta}|\mathbb{X}_{N}\right)  ,\quad\ell\left(
\boldsymbol{\theta}|\mathbb{X}_{N}\right)  =\sum\limits_{i=1}^{N}\ell\left(
\boldsymbol{\theta}|\boldsymbol{X}_{i}\right)  ,\quad\ell\left(
\boldsymbol{\theta}|\boldsymbol{x}\right)  =\log f\left(  \boldsymbol{x}%
,\boldsymbol{\theta}\right)  . \label{eq:MLE-theta}%
\end{equation}

We adopt the usual assumptions that (i) the true parameter $\boldsymbol{\theta
}^{0}$ lies in the interior of $\Theta$, (ii)~the density $f$ determines the
parameter $\boldsymbol{\theta}$ uniquely in the sense that $f(\boldsymbol{x}%
,\boldsymbol{\theta}_{1})=f(\boldsymbol{x},\boldsymbol{\theta}_{2})$ a.s. if
and only if $\boldsymbol{\theta}_{1}=\boldsymbol{\theta}_{2}$, and (iii)
$f\left(  \boldsymbol{x},\boldsymbol{\theta}\right)  $ is a sufficiently
smooth function of $\boldsymbol{x}$ and $\boldsymbol{\theta.}$ Then the error
of the estimate is asymptotically normal,
\begin{equation}
\sqrt{N}(\hat{\boldsymbol{\theta}}_{N}-\boldsymbol{\theta}^{0}%
)\xrightarrow{d}\mathcal{N}_{p}(\boldsymbol{0},Q_{\boldsymbol{\theta}^{0}%
}),\text{ as }N\rightarrow\infty, \label{normalitaMLE}%
\end{equation}
where%
\begin{equation}
Q_{\boldsymbol{\theta}^{0}}=J_{\boldsymbol{\theta}^{0}}^{-1},\quad
J_{\boldsymbol{\theta}^{0}}=\E\left(  \nabla_{\boldsymbol{\theta}}%
\ell(\boldsymbol{\theta}^{0}|\boldsymbol{X})^{\top}\nabla_{\boldsymbol{\theta
}}\ell(\boldsymbol{\theta}^{0}|\boldsymbol{X})\right)  ,\quad\boldsymbol{X}%
\sim f\left(  \boldsymbol{x},\boldsymbol{\theta}^{0}\right)  .
\label{eq:inform-matrix}%
\end{equation}
The matrix $J_{\boldsymbol{\theta}^{0}}$ is called the Fisher information
matrix for the parameterization $\boldsymbol{\theta}^{0}$. Here,
$\boldsymbol{X}$, $\boldsymbol{x}$, and $\boldsymbol{\theta}$ are columns,
while the gradient $\nabla_{\boldsymbol{\theta}}\ell$ of $\ell$ with respect
to the parameter $\boldsymbol{\theta}$ is a row vector, which is compatible
with the dimensioning of Jacobi matrices below. The mean value in
(\ref{eq:inform-matrix}) is taken with respect to $\boldsymbol{X}$, which is
the only random quantity in (\ref{eq:inform-matrix}). Cf., e.g., \cite[Theorem
5.1]{Lehmann-1998-TPE} for details.

\section{NESTED MAXIMUM LIKELIHOOD ESTIMATORS}

\label{sec:nested}

Now suppose that we have an additional information that the true parameter
$\boldsymbol{\theta}^{0}$ lies in a subspace of $\Theta$, which is
parameterized by $k\leq p$ parameters $(\varphi_{1}$,\ldots, $\varphi
_{k})^{\top}=\boldsymbol{\varphi}$. Denote by $\nabla_{\boldsymbol{\varphi}%
}\boldsymbol{\theta}(\boldsymbol{\varphi})$ the $p\times k$ Jacobi matrix with
entries $\frac{\partial\theta_{i}}{\partial\varphi_{j}}$. In the next theorem,
we derive the asymptotic covariance of the maximum likelihood estimator for
$\boldsymbol{\varphi}$,%
\begin{equation}
\hat{\boldsymbol{\varphi}}_{N}=\arg\max_{\boldsymbol{\varphi}}\ell\left(
\boldsymbol{\varphi}|\mathbb{X}_{N}\right)  ,\quad\ell\left(
\boldsymbol{\varphi}|\mathbb{X}_{N}\right)  =\sum\limits_{i=1}^{N}\ell\left(
\boldsymbol{\varphi}|\boldsymbol{X}_{i}\right)  ,\quad\ell\left(
\boldsymbol{\varphi}|\boldsymbol{x}\right)  =\log f\left(  \boldsymbol{x}%
,\boldsymbol{\theta}\left(  \boldsymbol{\varphi}\right)  \right)  ,
\label{eq:MLE-phi}%
\end{equation}
based on the asymptotic covariance of $\boldsymbol{\theta}$ in
(\ref{normalitaMLE}).

\begin{theorem}
\label{thm:sub-asymp}Assume that the map $\boldsymbol{\varphi}\mapsto
\boldsymbol{\theta}(\boldsymbol{\varphi})$ is one-to-one from $\Phi
\subset\mathbb{R}^{k}$ to $\Theta$, the map $\boldsymbol{\varphi}%
\mapsto\boldsymbol{\theta}(\boldsymbol{\varphi})$ is continuously
differentiable, $\nabla_{\boldsymbol{\varphi}}\boldsymbol{\theta
}(\boldsymbol{\varphi})$ is full rank for all $\boldsymbol{\varphi}\in\Phi$,
and $\boldsymbol{\theta}^{0}=\boldsymbol{\theta}(\boldsymbol{\varphi}^{0})$
with $\boldsymbol{\varphi}^{0}$ in the interior of $\Phi$. Then,
\begin{equation}
\sqrt{N}(\hat{\boldsymbol{\varphi}}_{N}-\boldsymbol{\varphi}^{0}%
)\xrightarrow{d}\mathcal{N}_{k}\left(  \boldsymbol{0},Q_{\boldsymbol{\varphi
}^{0}}\right)  \text{ as }N\rightarrow\infty, \label{dist_c_alfa}%
\end{equation}
where $Q_{\boldsymbol{\varphi}^{0}}=J_{\boldsymbol{\varphi}^{0}}^{-1}$, with
$J_{\boldsymbol{\varphi}^{0}}$ the Fisher information matrix of the
parameterization $\boldsymbol{\varphi}$ given by%
\begin{equation}
J_{\boldsymbol{\varphi}^{0}}=\nabla_{\boldsymbol{\varphi}}\boldsymbol{\theta
}(\boldsymbol{\varphi}^{0})^{\top}J_{\boldsymbol{\theta}^{0}}\nabla
_{\boldsymbol{\varphi}}\boldsymbol{\theta}(\boldsymbol{\varphi}^{0}).
\label{J_c_alfa}%
\end{equation}

\end{theorem}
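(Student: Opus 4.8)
The plan is to recognize that $\hat{\boldsymbol{\varphi}}_{N}$ in (\ref{eq:MLE-phi}) is nothing but the ordinary maximum likelihood estimator for the statistical model with density $g(\boldsymbol{x},\boldsymbol{\varphi})=f(\boldsymbol{x},\boldsymbol{\theta}(\boldsymbol{\varphi}))$ and parameter $\boldsymbol{\varphi}\in\Phi\subset\mathbb{R}^{k}$. The whole result then follows by applying the standard asymptotic normality statement (\ref{normalitaMLE})--(\ref{eq:inform-matrix}) to this model, once its hypotheses are checked, and by evaluating the resulting Fisher information via the chain rule.

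First I would verify that the three regularity conditions (i)--(iii) hold for the $\boldsymbol{\varphi}$-model. Condition (i) is immediate from the assumption that $\boldsymbol{\varphi}^{0}$ lies in the interior of $\Phi$. For identifiability (ii), I would observe that $g(\boldsymbol{x},\boldsymbol{\varphi}_{1})=g(\boldsymbol{x},\boldsymbol{\varphi}_{2})$ a.s.\ means $f(\boldsymbol{x},\boldsymbol{\theta}(\boldsymbol{\varphi}_{1}))=f(\boldsymbol{x},\boldsymbol{\theta}(\boldsymbol{\varphi}_{2}))$ a.s., which by the identifiability of $\boldsymbol{\theta}$ forces $\boldsymbol{\theta}(\boldsymbol{\varphi}_{1})=\boldsymbol{\theta}(\boldsymbol{\varphi}_{2})$, and then the injectivity of $\boldsymbol{\varphi}\mapsto\boldsymbol{\theta}(\boldsymbol{\varphi})$ gives $\boldsymbol{\varphi}_{1}=\boldsymbol{\varphi}_{2}$. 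The smoothness condition (iii) follows by composing the smooth $f$ with the continuously differentiable map $\boldsymbol{\theta}(\cdot)$.

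Next I would compute the Fisher information. By the chain rule, the (row-vector) score for $\boldsymbol{\varphi}$ factors as $\nabla_{\boldsymbol{\varphi}}\ell(\boldsymbol{\varphi}^{0}|\boldsymbol{X})=\nabla_{\boldsymbol{\theta}}\ell(\boldsymbol{\theta}^{0}|\boldsymbol{X})\,\nabla_{\boldsymbol{\varphi}}\boldsymbol{\theta}(\boldsymbol{\varphi}^{0})$, where the Jacobian $\nabla_{\boldsymbol{\varphi}}\boldsymbol{\theta}(\boldsymbol{\varphi}^{0})$ is a deterministic $p\times k$ matrix. Substituting this into the definition (\ref{eq:inform-matrix}) of the Fisher information for $\boldsymbol{\varphi}$ and pulling the deterministic Jacobian outside the expectation yields (\ref{J_c_alfa}) directly. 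Applying (\ref{normalitaMLE}) to the $\boldsymbol{\varphi}$-model then gives the claimed asymptotic normality with covariance $Q_{\boldsymbol{\varphi}^{0}}=J_{\boldsymbol{\varphi}^{0}}^{-1}$.

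The one point requiring care --- and the main obstacle --- is making sure that $J_{\boldsymbol{\varphi}^{0}}$ is actually invertible, so that (\ref{dist_c_alfa}) is meaningful. Here I would use that $J_{\boldsymbol{\theta}^{0}}$ is positive definite (it is invertible by the existence of $Q_{\boldsymbol{\theta}^{0}}$ in (\ref{normalitaMLE}) and positive semidefinite by its outer-product construction in (\ref{eq:inform-matrix})) together with the full column-rank assumption on $\nabla_{\boldsymbol{\varphi}}\boldsymbol{\theta}(\boldsymbol{\varphi}^{0})$: for any nonzero $\boldsymbol{v}\in\mathbb{R}^{k}$ the vector $\nabla_{\boldsymbol{\varphi}}\boldsymbol{\theta}(\boldsymbol{\varphi}^{0})\boldsymbol{v}$ is nonzero, so $\boldsymbol{v}^{\top}J_{\boldsymbol{\varphi}^{0}}\boldsymbol{v}=(\nabla_{\boldsymbol{\varphi}}\boldsymbol{\theta}(\boldsymbol{\varphi}^{0})\boldsymbol{v})^{\top}J_{\boldsymbol{\theta}^{0}}(\nabla_{\boldsymbol{\varphi}}\boldsymbol{\theta}(\boldsymbol{\varphi}^{0})\boldsymbol{v})>0$. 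This shows $J_{\boldsymbol{\varphi}^{0}}$ is positive definite and hence invertible, completing the argument.
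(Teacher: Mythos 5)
Your proposal is correct and follows essentially the same route as the paper's proof: factor the score via the chain rule, $\nabla_{\boldsymbol{\varphi}}\ell(\boldsymbol{\varphi}|\boldsymbol{X})=\nabla_{\boldsymbol{\theta}}\ell(\boldsymbol{\theta}|\boldsymbol{X})\nabla_{\boldsymbol{\varphi}}\boldsymbol{\theta}(\boldsymbol{\varphi})$, pull the deterministic Jacobian out of the expectation to get (\ref{J_c_alfa}), and then apply (\ref{normalitaMLE}) to the $\boldsymbol{\varphi}$-model. The only difference is that you spell out the verification of the regularity conditions and the positive definiteness of $J_{\boldsymbol{\varphi}^{0}}$ (via the full-rank assumption), which the paper leaves implicit.
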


\begin{proof}
From (\ref{eq:inform-matrix}) and the chain rule%
\[
\nabla_{\boldsymbol{\varphi}}\ell(\boldsymbol{\varphi}|\boldsymbol{X}%
)=\nabla_{\boldsymbol{\theta}}\ell(\boldsymbol{\theta}|\boldsymbol{X}%
)\nabla_{\boldsymbol{\varphi}}\boldsymbol{\theta}(\boldsymbol{\varphi}),
\]
we have
\begin{align*}
J_{\boldsymbol{\varphi}^{0}}  &  =\E\left(  \nabla_{\boldsymbol{\varphi}}%
\ell(\boldsymbol{\varphi}^{0}|\boldsymbol{X})^{\top}\nabla
_{\boldsymbol{\varphi}}\ell(\boldsymbol{\varphi}^{0}|\boldsymbol{X})\right) \\
&  =\nabla_{\boldsymbol{\varphi}}\boldsymbol{\theta}(\boldsymbol{\varphi}%
^{0})^{\top}\E\left(  \nabla_{\boldsymbol{\theta}}\ell(\boldsymbol{\theta}%
^{0}|\boldsymbol{X})^{\top}\nabla_{\boldsymbol{\theta}}\ell(\boldsymbol{\theta
}^{0}|\boldsymbol{X})\right)  \nabla_{\boldsymbol{\varphi}}\boldsymbol{\theta
}(\boldsymbol{\varphi}^{0})\\
&  =\nabla_{\boldsymbol{\varphi}}\boldsymbol{\theta}(\boldsymbol{\varphi}%
^{0})^{\top}J_{\boldsymbol{\theta}^{0}}\nabla_{\boldsymbol{\varphi}}
\boldsymbol{\theta}(\boldsymbol{\varphi}^{0}).
\end{align*}
The asymptotic distribution (\ref{dist_c_alfa}) is now (\ref{normalitaMLE})
applied to $\boldsymbol{\varphi}$.
\end{proof}

When the parameter $\boldsymbol{\theta}$ is the quantity of interest in an
application, it is useful to express the estimate and its variance in terms of
the original parameter $\boldsymbol{\theta}$ rather than the subspace
parameter~$\boldsymbol{\varphi}$.

\begin{corollary}
Under the assumptions of Theorem \ref{thm:sub-asymp},%
\begin{equation}
\sqrt{N}(\boldsymbol{\theta}\left(  \hat{\boldsymbol{\varphi}}_{N}\right)
-\boldsymbol{\theta}^{0})\xrightarrow{d}\mathcal{N}_{p}\left(  \boldsymbol{0}%
,Q_{\boldsymbol{\theta}\left(  \boldsymbol{\varphi}^{0}\right)  }\right)
\text{ as }N\rightarrow\infty, \label{distr_phi_c_alfa}%
\end{equation}
where%
\begin{equation}
Q_{\boldsymbol{\theta}\left(  \boldsymbol{\varphi}^{0}\right)  }%
=\nabla_{\boldsymbol{\varphi}}\boldsymbol{\theta}(\boldsymbol{\varphi}%
^{0})J_{\boldsymbol{\varphi}^{0}}^{-1}\nabla_{\boldsymbol{\varphi}%
}\boldsymbol{\theta}(\boldsymbol{\varphi}^{0})^{\top}=\nabla
_{\boldsymbol{\varphi}}\boldsymbol{\theta}(\boldsymbol{\varphi}^{0})\left(
\nabla_{\boldsymbol{\varphi}}\boldsymbol{\theta}(\boldsymbol{\varphi}%
^{0})^{\top}J_{\boldsymbol{\theta}^{0}}\nabla_{\boldsymbol{\varphi}%
}\boldsymbol{\theta}(\boldsymbol{\varphi}^{0})\right)  ^{-1}\nabla
_{\boldsymbol{\varphi}}\boldsymbol{\theta}(\boldsymbol{\varphi}^{0})^{\top}.
\label{eq:J-inv-proj}%
\end{equation}

\end{corollary}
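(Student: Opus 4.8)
The plan is to obtain (\ref{distr_phi_c_alfa}) directly from the asymptotic normality of $\hat{\boldsymbol{\varphi}}_N$ established in Theorem~\ref{thm:sub-asymp} by transporting it through the smooth reparameterization map $\boldsymbol{\varphi}\mapsto\boldsymbol{\theta}(\boldsymbol{\varphi})$. The natural tool is the multivariate delta method. Since $\boldsymbol{\varphi}\mapsto\boldsymbol{\theta}(\boldsymbol{\varphi})$ is assumed continuously differentiable on $\Phi$ and $\boldsymbol{\varphi}^0$ lies in the interior of $\Phi$, the map is in particular differentiable at $\boldsymbol{\varphi}^0$ with Jacobian $\nabla_{\boldsymbol{\varphi}}\boldsymbol{\theta}(\boldsymbol{\varphi}^0)$, so the hypotheses of the delta method are met.

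First I would recall from Theorem~\ref{thm:sub-asymp} that $\sqrt{N}(\hat{\boldsymbol{\varphi}}_N-\boldsymbol{\varphi}^0)\xrightarrow{d}\mathcal{N}_k(\boldsymbol{0},Q_{\boldsymbol{\varphi}^0})$ with $Q_{\boldsymbol{\varphi}^0}=J_{\boldsymbol{\varphi}^0}^{-1}$. Applying the delta method to $\boldsymbol{\theta}(\cdot)$ at $\boldsymbol{\varphi}^0$ then yields $\sqrt{N}(\boldsymbol{\theta}(\hat{\boldsymbol{\varphi}}_N)-\boldsymbol{\theta}(\boldsymbol{\varphi}^0))\xrightarrow{d}\mathcal{N}_p(\boldsymbol{0},\nabla_{\boldsymbol{\varphi}}\boldsymbol{\theta}(\boldsymbol{\varphi}^0)\,Q_{\boldsymbol{\varphi}^0}\,\nabla_{\boldsymbol{\varphi}}\boldsymbol{\theta}(\boldsymbol{\varphi}^0)^\top)$. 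Since $\boldsymbol{\theta}(\boldsymbol{\varphi}^0)=\boldsymbol{\theta}^0$ by assumption, the centering is exactly $\boldsymbol{\theta}^0$, which gives (\ref{distr_phi_c_alfa}) together with the first equality in (\ref{eq:J-inv-proj}).

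The second equality in (\ref{eq:J-inv-proj}) is then purely algebraic: substitute $Q_{\boldsymbol{\varphi}^0}=J_{\boldsymbol{\varphi}^0}^{-1}$ and replace $J_{\boldsymbol{\varphi}^0}$ by its expression (\ref{J_c_alfa}). I would note that the inversion is legitimate because the full-rank hypothesis on $\nabla_{\boldsymbol{\varphi}}\boldsymbol{\theta}(\boldsymbol{\varphi}^0)$, combined with the positive definiteness of $J_{\boldsymbol{\theta}^0}$, makes the $k\times k$ matrix $J_{\boldsymbol{\varphi}^0}=\nabla_{\boldsymbol{\varphi}}\boldsymbol{\theta}(\boldsymbol{\varphi}^0)^\top J_{\boldsymbol{\theta}^0}\nabla_{\boldsymbol{\varphi}}\boldsymbol{\theta}(\boldsymbol{\varphi}^0)$ nonsingular.

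I do not anticipate a genuine obstacle: the result is essentially Theorem~\ref{thm:sub-asymp} pushed forward by a smooth change of variables. The only points deserving care are bookkeeping of the orientation of the Jacobian (which is $p\times k$, so that the sandwiched product has the correct $p\times p$ shape), and confirming that the continuous-differentiability hypothesis indeed supplies the pointwise differentiability that the delta method requires at $\boldsymbol{\varphi}^0$.
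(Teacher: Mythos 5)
Your proposal is correct and follows exactly the paper's own route: the paper likewise derives (\ref{distr_phi_c_alfa}) by applying the delta method to the convergence (\ref{dist_c_alfa}) from Theorem \ref{thm:sub-asymp}, using the continuous differentiability of $\boldsymbol{\varphi}\mapsto\boldsymbol{\theta}(\boldsymbol{\varphi})$. Your added remarks on the invertibility of $J_{\boldsymbol{\varphi}^{0}}$ and the dimensions of the Jacobian are sound elaborations of details the paper leaves implicit.
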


\begin{proof}
The lemma follows from (\ref{dist_c_alfa}) by the delta method \cite[p.~387]%
{Rao-1973-LSI}, since the map $\boldsymbol{\varphi\mapsto\theta}%
(\boldsymbol{\varphi})$ is continuously differentiable.
\end{proof}

\begin{remark}
The matrix $Q_{\boldsymbol{\theta}\left(  \boldsymbol{\varphi}^{0}\right)  }$
is singular, so it cannot be written as the inverse of another matrix, but it
can be understood as the inverse $J_{\boldsymbol{\theta}(\boldsymbol{\varphi
}^{0})}^{-1}$ of the Fisher information matrix for $\boldsymbol{\varphi}$,
embedded in the larger parameter space $\Theta$.
\end{remark}

Suppose that $\boldsymbol{\psi}$ is another parameterization which satisfies
the same assumption as $\boldsymbol{\varphi}$ in Theorem \ref{thm:sub-asymp}:
the map $\boldsymbol{\psi}\mapsto\boldsymbol{\theta}(\boldsymbol{\psi})$ is
one-to-one from $\Psi\subset\mathbb{R}^{m}$, $k \leq m \leq p, $ to $\Theta$,
$\boldsymbol{\psi}\mapsto\boldsymbol{\theta}(\boldsymbol{\psi})$ is
continuously differentiable, $\nabla_{\boldsymbol{\psi}}\boldsymbol{\theta
}(\boldsymbol{\psi})$ is full rank for all $\boldsymbol{\psi}\in\Psi$, and
$\boldsymbol{\theta}^{0}=\boldsymbol{\theta}(\boldsymbol{\psi}^{0})$, where
$\boldsymbol{\psi}^{0}$ is in the interior of $\Psi$. Then, similarly as in
(\ref{distr_phi_c_alfa}), we have also%
\begin{equation}
\sqrt{N}(\boldsymbol{\theta}( \hat{\boldsymbol{\psi}}_{N}) -\boldsymbol{\theta
}^{0})\xrightarrow{d}\mathcal{N}_{p}\left(  \boldsymbol{0}%
,Q_{\boldsymbol{\theta}\left(  \boldsymbol{\psi}^{0}\right)  }\right)  \text{
as }N\rightarrow\infty, \label{distr_psi_c_alfa}%
\end{equation}
where, as in (\ref{eq:J-inv-proj}),%
\begin{equation}
Q_{\boldsymbol{\theta}\left(  \boldsymbol{\psi}^{0}\right)  }=\nabla
_{\boldsymbol{\psi}}\boldsymbol{\theta}(\boldsymbol{\psi}^{0}%
)J_{\boldsymbol{\psi}^{0}}^{-1}\nabla_{\boldsymbol{\psi}}\boldsymbol{\theta
}(\boldsymbol{\psi}^{0})^{\top}=\nabla_{\boldsymbol{\psi}}\boldsymbol{\theta
}(\boldsymbol{\psi}^{0})\left(  \nabla_{\boldsymbol{\psi}}\boldsymbol{\theta
}(\boldsymbol{\psi}^{0})^{\top}J_{\boldsymbol{\theta}^{0}}\nabla
_{\boldsymbol{\psi}}\boldsymbol{\theta}(\boldsymbol{\psi}^{0})\right)
^{-1}\nabla_{\boldsymbol{\psi}}\boldsymbol{\theta}(\boldsymbol{\psi}%
^{0})^{\top}. \label{eq:J-inv-psi-proj}%
\end{equation}

The next theorem shows that when we have two parameterizations
$\boldsymbol{\varphi}$ and $\boldsymbol{\psi}$ which are nested, then the
smaller parameterization has smaller or equal asymptotic covariance than the
larger one. For symmetric matrices $A$ and $B$, $A\leq B$ means that $A-B$ is
positive semidefinite.

\begin{theorem}
\label{thm:comparison}Suppose that $\boldsymbol{\varphi}$ and
$\boldsymbol{\psi}$ satisfy the assumptions in Theorem \ref{thm:sub-asymp},
and there exists a differentiable mapping $\boldsymbol{\varphi\mapsto\psi}$
from $\Phi$ to $\Psi$, such that $\boldsymbol{\varphi}^{0}\boldsymbol{\mapsto
\psi}^{0}$. Then,%
\begin{equation}
Q_{\boldsymbol{\theta}\left(  \boldsymbol{\varphi}^{0}\right)  }\leq
Q_{\boldsymbol{\theta}\left(  \boldsymbol{\psi}^{0}\right)  }.
\label{nerovnost}%
\end{equation}
In addition, if $U\sim\mathcal{N}_{p}\left(  \boldsymbol{0}%
,Q_{\boldsymbol{\theta}\left(  \boldsymbol{\varphi}^{0}\right)  }\right)  $
and $V\sim\mathcal{N}_{p}\left(  \boldsymbol{0},Q_{\boldsymbol{\theta}\left(
\boldsymbol{\psi}^{0}\right)  }\right)  $ are random vectors with the
asymptotic distributions of the estimates $\boldsymbol{\theta}\left(
\hat{\boldsymbol{\varphi}}_{N}\right)  $ and $\boldsymbol{\theta}\left(
\hat{\boldsymbol{\psi}}_{N}\right)  $, then%
\begin{equation}
\E\left\vert U\right\vert ^{2}=\frac{1}{N}\operatorname*{Tr}%
Q_{\boldsymbol{\theta}\left(  \boldsymbol{\varphi}^{0}\right)  }\leq\frac
{1}{N}\operatorname*{Tr}Q_{\boldsymbol{\theta}\left(  \boldsymbol{\psi}%
^{0}\right)  }=\E\left\vert V\right\vert ^{2}, \label{eq:2-norm-comparison}%
\end{equation}
where $\left\vert V\right\vert =\left(  V^{\top}V\right)  ^{1/2}$ is the
standard Euclidean norm in $\mathbb{R}^{p}$.
\end{theorem}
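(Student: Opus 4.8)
The plan is to reduce the Löwner inequality (\ref{nerovnost}) to the elementary fact that orthogonal projections onto nested subspaces are ordered, after absorbing the Fisher metric into a symmetric square root. Write $A=\nabla_{\boldsymbol{\varphi}}\boldsymbol{\theta}(\boldsymbol{\varphi}^{0})$ (a $p\times k$ matrix), $B=\nabla_{\boldsymbol{\psi}}\boldsymbol{\theta}(\boldsymbol{\psi}^{0})$ (a $p\times m$ matrix), and $J=J_{\boldsymbol{\theta}^{0}}$, so that (\ref{eq:J-inv-proj}) and (\ref{eq:J-inv-psi-proj}) read
\[
Q_{\boldsymbol{\theta}(\boldsymbol{\varphi}^{0})}=A(A^{\top}JA)^{-1}A^{\top},\qquad Q_{\boldsymbol{\theta}(\boldsymbol{\psi}^{0})}=B(B^{\top}JB)^{-1}B^{\top}.
\]
The nesting hypothesis means the $\boldsymbol{\varphi}$-model is a submodel of the $\boldsymbol{\psi}$-model, i.e. $\boldsymbol{\theta}(\boldsymbol{\varphi})=\boldsymbol{\theta}(\boldsymbol{\psi}(\boldsymbol{\varphi}))$, so the chain rule at $\boldsymbol{\varphi}^{0}$ gives $A=BC$ with $C=\nabla_{\boldsymbol{\varphi}}\boldsymbol{\psi}(\boldsymbol{\varphi}^{0})$ an $m\times k$ matrix. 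In particular $\operatorname{range}(A)\subseteq\operatorname{range}(B)$.

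The key step is the square-root substitution. Since $J$ is symmetric positive definite (it is invertible under the standing assumptions, appearing as $J_{\boldsymbol{\theta}^{0}}^{-1}$ in (\ref{eq:inform-matrix})), let $J^{1/2}$ be its positive definite square root and set $\tilde{A}=J^{1/2}A$, $\tilde{B}=J^{1/2}B$. Using $A^{\top}JA=\tilde{A}^{\top}\tilde{A}$, a direct substitution recasts the two matrices as
\[
Q_{\boldsymbol{\theta}(\boldsymbol{\varphi}^{0})}=J^{-1/2}P_{\tilde{A}}J^{-1/2},\qquad Q_{\boldsymbol{\theta}(\boldsymbol{\psi}^{0})}=J^{-1/2}P_{\tilde{B}}J^{-1/2},
\]
where $P_{\tilde{A}}=\tilde{A}(\tilde{A}^{\top}\tilde{A})^{-1}\tilde{A}^{\top}$ and $P_{\tilde{B}}=\tilde{B}(\tilde{B}^{\top}\tilde{B})^{-1}\tilde{B}^{\top}$ are the orthogonal projections onto $\operatorname{range}(\tilde{A})$ and $\operatorname{range}(\tilde{B})$; these are well defined because $A,B$ have full column rank and $J^{1/2}$ is invertible. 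Invertibility of $J^{1/2}$ also preserves the range nesting, so $\operatorname{range}(\tilde{A})\subseteq\operatorname{range}(\tilde{B})$.

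I would then invoke the elementary lemma that nested ranges force $P_{\tilde{A}}\leq P_{\tilde{B}}$: since $\operatorname{range}(\tilde{A})\subseteq\operatorname{range}(\tilde{B})$ one has $P_{\tilde{B}}P_{\tilde{A}}=P_{\tilde{A}}P_{\tilde{B}}=P_{\tilde{A}}$, whence $P_{\tilde{B}}-P_{\tilde{A}}$ is symmetric and idempotent, hence itself an orthogonal projection and therefore positive semidefinite. Conjugation by the symmetric matrix $J^{-1/2}$ preserves the Löwner order, because $v^{\top}J^{-1/2}(P_{\tilde{B}}-P_{\tilde{A}})J^{-1/2}v=(J^{-1/2}v)^{\top}(P_{\tilde{B}}-P_{\tilde{A}})(J^{-1/2}v)\geq0$ for every $v$, and this is exactly (\ref{nerovnost}). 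The only point needing care is the bookkeeping that the $Q$'s really are congruence images of genuine \emph{orthogonal} projections in the $J^{1/2}$-transformed coordinates; once the substitution is made, this is the crux and everything else is routine linear algebra.

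For the second assertion, (\ref{nerovnost}) says $Q_{\boldsymbol{\theta}(\boldsymbol{\psi}^{0})}-Q_{\boldsymbol{\theta}(\boldsymbol{\varphi}^{0})}$ is positive semidefinite, and the trace of a positive semidefinite matrix is nonnegative, so $\operatorname{Tr}Q_{\boldsymbol{\theta}(\boldsymbol{\varphi}^{0})}\leq\operatorname{Tr}Q_{\boldsymbol{\theta}(\boldsymbol{\psi}^{0})}$. Since $\E|W|^{2}=\operatorname{Tr}\operatorname*{Cov}(W)$ for any centered random vector $W$, the two outer equalities in (\ref{eq:2-norm-comparison}) follow directly from the covariances $\tfrac{1}{N}Q_{\boldsymbol{\theta}(\boldsymbol{\varphi}^{0})}$ and $\tfrac{1}{N}Q_{\boldsymbol{\theta}(\boldsymbol{\psi}^{0})}$ of $U$ and $V$, and the middle inequality is the trace comparison just established.
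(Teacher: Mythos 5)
Your proof is correct and follows essentially the same route as the paper: both arguments absorb the Fisher information via its symmetric square root, identify $Q_{\boldsymbol{\theta}(\boldsymbol{\varphi}^{0})}$ and $Q_{\boldsymbol{\theta}(\boldsymbol{\psi}^{0})}$ as congruence images of orthogonal projections onto ranges nested by the chain rule, and conclude with the ordering of nested projections plus the trace monotonicity. Your write-up merely makes explicit two steps the paper leaves as "standard" (the idempotent-difference argument for $P_{\tilde{A}}\leq P_{\tilde{B}}$ and the preservation of the L\"owner order under conjugation by $J^{-1/2}$).
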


\begin{proof}
Denote $A=J_{\boldsymbol{\theta}^{0}}$, $B=\nabla_{\boldsymbol{\varphi}%
}\boldsymbol{\theta}(\boldsymbol{\varphi}^{0})$, $C=\nabla_{\boldsymbol{\psi}%
}\boldsymbol{\theta}(\boldsymbol{\psi}^{0})$. From the chain rule,%
\[
\nabla_{\boldsymbol{\varphi}}\boldsymbol{\theta}\left(  \boldsymbol{\varphi
}^{0}\right)  =\nabla_{\boldsymbol{\psi}}\boldsymbol{\theta}\left(
\boldsymbol{\psi}^{0}\right)  \nabla_{\boldsymbol{\varphi}}\boldsymbol{\psi
}\left(  \boldsymbol{\varphi}^{0}\right)  ,
\]
we have that $B=C\nabla_{\boldsymbol{\varphi}}\boldsymbol{\psi}\left(
\boldsymbol{\varphi}^{0}\right)  $, and, consequently, $\operatorname*{Range}%
B\subset\operatorname*{Range}C$. Define%
\begin{align*}
P_{B}  &  =A^{1/2}B(B^{\top}AB)^{-1}B^{\top}A^{1/2},\\
P_{C}  &  =A^{1/2}C(C^{\top}AC)^{-1}C^{\top}A^{1/2}.
\end{align*}
The matrices $P_{B}$ and $P_{C}$ are symmetric and idempotent, hence they are
orthogonal projections. In addition,
\[
\operatorname*{Range}P_{B}=\operatorname*{Range}A^{1/2}B\subset
\operatorname*{Range}A^{1/2}C=\operatorname*{Range}P_{C}.
\]
Consequently, $P_{B}\leq P_{C}$ holds from standard properties of orthogonal
projections, and (\ref{nerovnost}) follows.

To prove (\ref{eq:2-norm-comparison}), note that for random vector $X$ with
$\E X=0$ and finite second moment, $\E\left\vert X\right\vert ^{2}%
=\operatorname*{Tr}\operatorname{Cov}X$ from Karhunen-Lo\`{e}ve decomposition
and Parseval identity. The proof is concluded by using the fact that for
symmetric matrices, $A\leq B$ implies $\operatorname*{Tr}A\leq
\operatorname*{Tr}B$, cf. e.g., \cite{Carlen-2010-TIQ}.
\end{proof}

\begin{remark}
In the practically interesting cases when there is a large difference in the
dimensions of the parameters $\boldsymbol{\varphi}$ and $\boldsymbol{\psi}$,
many eigenvalues in the covariance of the estimation error become zero. The
computational tests in Sec.~\ref{sec:comp} show that the resulting decrease of
the estimation error can be significant.
\end{remark}

\section{APPLICATION: NESTED COVARIANCE MODELS}

\label{sec:application}

Models of covariance (e.g., of the state vector in a numerical weather
prediction model) and the quality of the estimated covariance are one of the
key components of data assimilation algorithms. High dimension of the problem
usually prohibits working with the covariance matrix explicitly. In ensemble
filtering methods, this difficulty may be circumvented by working directly
with the original small sample like in the classical Ensemble Kalman filter.
This, however, effectively means using the sample covariance matrix with its
rank deficiency and spurious correlations. Current filtering methods use
shrinkage and localization as noted above, and ad hoc techniques for dimension reduction.

A reliable way towards effective filtering methods lies in introducing
sparsity into covariance matrices or their inverses by means of suitable
covariance models. The results of previous section suggest that it is
beneficial to choose parsimonious models, and indeed, in practical application
we often encounter models with a surprisingly low number of parameters.

A large class of covariance models which encompass sparsity in an efficient
manner arises from Graphical models \citep{lauritzen1996graphical} and
Gaussian Markov Random Fields (GMRF), \citep{rue2005gaussian}, where a special
structure of inverse covariance is assumed. In the area of GMRF, nested
covariance models arise naturally. If, for instance, we consider a GMRF on a
rectangular mesh, each gridpoint may have 4, 8, 12, 20 etc. neighbouring
points which have nonzero corresponding element in the inverse covariance
matrix. Thus, a block band-diagonal structure in the inverse covariance
arises
\citep{Ueno-2009-CRI}%
. The results of Section \ref{sec:nested} apply for this case and we shall
illustrate them in the simulation study of Section \ref{sec:comp}.

Finally, variational assimilation methods, which dominate today's practice of
meteorological services, usually employ a covariance model based on a series
of transformations leading to independence of variables
\citep{Bannister-2008-RFE, Michel-2010-IBE}. At the end, this results in an
estimation problem for normal distribution with a diagonal covariance matrix.

For both ensemble and variational methods, any additional knowledge can be
used to improve the estimate of covariance. Second-order stationarity leads to
diagonality in spectral space, diagonality in wavelet space is often a
legitimate assumption \citep[e.g.,][]{Pannekoucke-2007-FPW} and we shall treat
the diagonal case in more detail.
Suppose
\begin{equation}
\boldsymbol{X}\sim\mathcal{N}_{n}(\boldsymbol{0},D), \label{Xnorm}%
\end{equation}
where $\boldsymbol{X}$ denotes the random field after the appropriate
transform and $D$ is a diagonal matrix. It is clear that estimating $D$ by the
full sample covariance matrix (what would be the case when using the classical
EnKF) is ineffective in this situation and it is natural to use only the
diagonal part of the sample covariance. In practice, the resulting diagonal
matrix may still turn out to be noisy
\citep{Kasanicky-2015-SDE}%
, and further assumptions like a certain type of decay of the diagonal entries
may be realistic.

In what follows we briefly introduce the particular covariance structures,
state some known facts on full and diagonal covariance, propose parametric
models for the diagonal and compute corresponding MLE.

\subsection{Sample covariance}

\label{sec:sample-covariance}

The top-level parameter space $\Theta$ consists of all symmetric positive
definite matrices, resulting in the parameterization $\Sigma$ with
$\frac{n\left(  n+1\right)  }{2}$ independent parameters. The likelihood of a
sample $\mathbb{X}_{N}=\left[  \boldsymbol{X}^{(1)},\ldots,\boldsymbol{X}%
^{(N)}\right]  $ from $\mathcal{N}_{n}(\boldsymbol{0},\Sigma)$ is%
\[
L\left(  \Sigma|\mathbb{X}_{N}\right)  =\frac{1}{\left(  \det\Sigma\right)
^{N/2}\left(  2\pi\right)  ^{nN/2}}e^{-\frac{1}{2}\operatorname*{Tr}\left(
\Sigma^{-1}\mathbb{X}_{N}\mathbb{X}_{N}^{\top}\right)  }.
\]
If $N\geq n$, it is well known (e.g. \cite{Muirhead-2005-AMS}, p. 83) that the
likelihood is maximized at what we call here sample covariance matrix
\begin{equation}
\hat{\Sigma}_{N}=\frac{1}{N}\sum_{i=1}^{N}\boldsymbol{X}^{(i)}\left(
\boldsymbol{X}^{(i)}\right)  ^{\top}. \label{eq:sample-covariance}%
\end{equation}
The Fisher information matrix of the sample covariance estimator is
\cite[p.~356]{Magnus-2007-MDC}%
\[
J^{(0)}(\mujvec(\Sigma))=\frac{1}{2}\Sigma^{-1}\otimes\Sigma^{-1},
\]
where $\otimes$ stands for the Kronecker product and $\mujvec$ is an operator
that transforms a matrix into a~vector by stacking the columns of the matrix
one underneath the other. This matrix has dimension $n^{2}\times n^{2}$.

\begin{remark}
\label{rem:singular} If $\hat{\Sigma}_{N}$ is singular, $L\left(  \hat{\Sigma
}_{N}|\mathbb{X}_{N}\right)  $ cannot be evaluated because that requires the
inverse of $\hat{\Sigma}$. Also, in this case the likelihood $L\left(
\Sigma|\mathbb{X}_{N}\right)  $ is not bounded above on the set of all
$\Sigma>0$, thus the maximum of $L\left(  \Sigma|\mathbb{X}_{N}\right)  $ does
not exist on that space. To show that, consider an orthonormal change of basis
so that the vectors in $\operatorname*{span}\left(  \mathbb{X}_{N}\right)  $
come first, write vectors and matrices in the corresponding $2\times2$ block
form, and let%
\[
\tilde{\Sigma}_{N}=\left[
\begin{array}
[c]{cc}%
\tilde{\Sigma}_{11} & 0\\
0 & 0
\end{array}
\right]  ,\quad\tilde{\Sigma}_{11}>0.
\]
Then $\lim_{a\rightarrow0^{+}}\mathbb{X}_{N}^{\top}\left(  \tilde{\Sigma}%
_{N}+aI\right)  ^{-1}\mathbb{X}_{N}$ exists, but $\lim_{a\rightarrow0^{+}}%
\det\left(  \tilde{\Sigma}_{N}+aI\right)  =0$, thus
\[
\lim_{a\rightarrow0^{+}}L\left(  \tilde{\Sigma}_{N}+aI\right\vert
\mathbb{X}_{N})=\infty.
\]

Note that when the likelihood is redefined in terms of the subspace
$\operatorname*{span}\left(  \mathbb{X}_{N}\right)  $ only, the sample
covariance can be obtained by maximization on the subspace \cite[p.
527]{Rao-1973-LSI}.
\end{remark}

When the true covariance is diagonal ($\Sigma\equiv D$, cf. (\ref{Xnorm})), a
significant improvement can be achieved by setting the off-diagonal terms of
sample covariance to zero,
\begin{equation}
\hat{D}_{N}^{\left(  0\right)  }=\operatorname{diag}\left(  \hat{\Sigma}%
_{N}\right)  . \label{eq:diag-sample-covariance}%
\end{equation}
It is known that using only the diagonal of the unbiased sample covariance
\[
\hat{\Sigma}_{N}^{u}=\frac{1}{N-1}\sum_{i=1}^{N}\boldsymbol{X}^{(i)}\left(
\boldsymbol{X}^{(i)}\right)  ^{\top}%
\]
results in smaller (or equal) Frobenius norm of the error pointwise,%
\begin{equation}
\left\vert \hat{D}_{N}^{\left(  0\right)  }-D\right\vert _{F}\leq\left\vert
\hat{\Sigma}_{N}^{u}-D\right\vert _{F} \label{eq:pointwise}%
\end{equation}
cf. \cite{Furrer-2007-EHP} for the case when the mean is assumed to be known
like here, and \cite{Kasanicky-2015-SDE} for the unbiased sample covariance
and unknown mean.

\subsection{Diagonal covariance}

\label{sec:diag-covariance}

The parameter space $\Theta_{1}$ consisting of all diagonal matrices with
positive diagonal, with $n$ parameters $\boldsymbol{d}=\left(  d_{1}%
,\ldots,d_{n}\right)  ^{\top}$, can be viewed as a simple class of models for
either covariance or its inverse. The log-likelihood function for
$D=\diag(d_{1},\ldots,d_{n})$ with a given random sample $\mathbb{X}%
_{N}=\left[  \boldsymbol{X}^{(1)},\ldots,\boldsymbol{X}^{(N)}\right]  $ from
$\mathcal{N}_{n}\left(  \boldsymbol{0},D\right)  $ is
\[
\ell(D|\mathbb{X}_{N})=-\frac{N}{2}\log\left(  (2\pi)^{n}|D|\right)  -\frac
{1}{2}\sum_{k=1}^{N}\left(  \boldsymbol{X}^{(k)}\right)  ^{\top}%
D^{-1}\boldsymbol{X}^{(k)}%
\]
and has its maximum at
\begin{equation}
\hat{d}_{j}=\frac{1}{N}\sum_{k=1}^{N}\left(  X_{j}^{(k)}\right)  ^{2}%
,\hspace{3mm}j=1,\ldots,n, \label{estD1}%
\end{equation}
where $X_{j}^{(k)}$ denotes the $j$-th entry of $\boldsymbol{X}^{(k)}$. The
sum of squares $S_{j}^{2}=\sum_{k=1}^{N}\left(  X_{j}^{(k)}\right)  ^{2}$ is a
sufficient statistic for the variance $d_{j}$. Thus, we get the maximum
likelihood estimator
\begin{equation}
\hat{D}_{N}^{\left(  1\right)  }=\frac{1}{N}\diag\left(  S_{1}^{2}%
,\ldots,S_{n}^{2}\right)  . \label{D1}%
\end{equation}

It is easy to compute the Fisher information matrix explicitly,%
\begin{equation}
J_{D^{\left(  1\right)  }}=\diag\left(  \frac{1}{2d_{1}^{2}},\ldots,\frac
{1}{2d_{n}^{2}}\right)  . \label{eq:J1}%
\end{equation}
which is an $n \times n$ matrix and gives the asymptotic covariance of the
estimation error%
\[
\frac{1}{N}Q_{D^{\left(  1\right)  }}=\frac{1}{N}J_{D^{\left(  1\right)  }%
}^{-1}=\frac{1}{N}\diag\left(  2d_{1}^{2},\ldots, 2d_{n}^{2}\right)
\]
from (\ref{normalitaMLE}).

\subsection{Diagonal covariance with prescribed decay by 3 parameters}

\label{sec:3-param-covariance}

A more specific situation appears when we have an additional information that
the matrix $D$ is not only diagonal, but its diagonal entries have a
prescribed decay. For instance, this decay can be governed by a model of the
form $d_{i}=((c_{1}+c_{2}h_{i})f_{i}(\alpha))^{-1}$, $i=1,\ldots,n$, where
$c_{1},c_{2}$ and $\alpha$ are unknown parameters, $h_{1},\ldots,h_{n}$ are
known positive numbers, and $f_{1},\ldots,f_{n}$ are known differentiable
functions. For easier computation it is useful to work with $\tau_{i}=\frac
{1}{d_{i}}=(c_{1}+c_{2}h_{i})f_{i}(\alpha)$. Maximum likelihood estimators for
$c_{1},c_{2}$, and $\alpha$ can be computed effectively from the likelihood
\begin{equation}
\ell(D|\mathbb{X}_{N})=-\frac{N}{2}n\log(2\pi)+\frac{N}{2}\sum_{i=1}^{n}%
\log\tau_{i}-\frac{1}{2}\sum_{i=1}^{n}\tau_{i}S_{i}^{2} \label{l_podle_tau}%
\end{equation}
by using the chain rule. It holds that
\begin{align*}
\frac{\partial\ell}{\partial c_{1}}  &  =\sum_{i=1}^{n}\frac{\partial\ell
}{\partial\tau_{i}}\frac{\partial\tau_{i}}{\partial c_{1}}=\sum_{i=1}%
^{n}\left(  \frac{N}{2\tau_{i}}-\frac{S_{i}^{2}}{2}\right)  \frac{\partial
\tau_{i}}{\partial c_{1}}\\
&  =\frac{N}{2}\sum_{i=1}^{n}\left(  \frac{1}{(c_{1}+c_{2}h_{i})f_{i}(\alpha
)}-\frac{1}{N}S_{i}^{2}\right)  f_{i}(\alpha).
\end{align*}
Setting this derivative equal to zero we get
\begin{equation}
\sum_{i=1}^{n}\left(  \frac{1}{c_{1}+c_{2}h_{i}}-\frac{1}{N}S_{i}^{2}%
f_{i}(\alpha)\right)  =0. \label{rceC1}%
\end{equation}
Analogously,
\[
\frac{\partial\ell}{\partial c_{2}}=\sum_{i=1}^{n}\frac{\partial\ell}%
{\partial\tau_{i}}\frac{\partial\tau_{i}}{\partial c_{2}}=\frac{N}{2}%
\sum_{i=1}^{n}\left(  \frac{1}{(c_{1}+c_{2}h_{i})f_{i}(\alpha)}-\frac{1}%
{N}S_{i}^{2}\right)  h_{i}f_{i}(\alpha),
\]
so the equation for estimating the parameter $c_{2}$ is%
\begin{equation}
\sum_{i=1}^{n}\left(  \frac{h_{i}}{c_{1}+c_{2}h_{i}}-\frac{1}{N}S_{i}^{2}%
h_{i}f_{i}(\alpha)\right)  =0. \label{rceC2}%
\end{equation}
Similarly,
\begin{align*}
\frac{\partial\ell}{\partial\alpha}  &  =\sum_{i=1}^{n}\frac{\partial\ell
}{\partial\tau_{i}}\frac{\partial\tau_{i}}{\partial\alpha}=\frac{N}{2}%
\sum_{i=1}^{n}\left(  \frac{1}{(c_{1}+c_{2}h_{i})f_{i}(\alpha)}-\frac{1}%
{N}S_{i}^{2}\right)  (c_{1}+c_{2}h_{i})\frac{\partial f_{i}(\alpha)}%
{\partial\alpha}\\
&  =\frac{N}{2}\sum_{i=1}^{n}\left(  \frac{1}{f_{i}(\alpha)}-\frac{1}{N}%
S_{i}^{2}(c_{1}+c_{2}h_{i})\right)  \frac{\partial f_{i}(\alpha)}%
{\partial\alpha}%
\end{align*}
and setting the derivative to zero, we get
\begin{equation}
\sum_{i=1}^{n}\left(  \frac{1}{f_{i}(\alpha)}\frac{\partial f_{i}(\alpha
)}{\partial\alpha}-\frac{1}{N}S_{i}^{2}(c_{1}+c_{2}h_{i})\frac{\partial
f_{i}(\alpha)}{\partial\alpha}\right)  =0. \label{rceAlfa3}%
\end{equation}

The maximum likelihood estimator for $D$ is then given by
\begin{equation}
\hat{D}^{(3)}=\diag\{((\hat{c}_{1}+\hat{c}_{2}h_{i})f_{i}(\hat{\alpha}))^{-1},
i = 1,\ldots,n \}, \label{D03}%
\end{equation}
where $(\hat{c}_{1},\hat{c}_{2},\hat{\alpha})$ is the solution of the system
(\ref{rceC1}), (\ref{rceC2}), (\ref{rceAlfa3}). This expression corresponds to
searching a maximum likelihood estimator of $D$ in the subspace $\Theta
_{3}\subset\Theta_{1}\subset\Theta$ formed by diagonal matrices
$\diag\{((c_{1}+c_{2}h_{i})f_{i}(\alpha))^{-1},i=1,\ldots,n\}$.

For completeness, the asymptotic covariance of the estimation error about
\[
D^{(3)}=\diag\{d_{i}(c_{1},c_{2},\alpha), i=1,\ldots,n\},
\]
contained in $\mathbb{X}_{N}$ is
\begin{equation}
\frac{1}{N}Q_{D^{\left(  3\right)  }}=\frac{1}{N}\nabla\boldsymbol{d}%
(c_{1},c_{2},\alpha)J_{c_{1},c_{2},\alpha}^{-1}\nabla\boldsymbol{d}%
(c_{1},c_{2},\alpha)^{\top} \label{J_c1_c2_alfa}%
\end{equation}
from (\ref{eq:J-inv-proj}), where the Fisher information matrix $J_{c_{1}%
,c_{2},\alpha}$ is the $3\times3$ matrix%
\begin{align*}
&  J_{c_{1},c_{2},\alpha}=\\
&
\begin{bmatrix}
\frac{1}{2}\sum_{i=1}^{n}\frac{1}{(c_{1}+c_{2}h_{i})^{2}} & \frac{1}{2}%
\sum_{i=1}^{n}\frac{h_{i}}{(c_{1}+c_{2}h_{i})^{2}} & \frac{1}{2}\sum_{i=1}%
^{n}\frac{1}{(c_{1}+c_{2}h_{i})f_{i}(\alpha)}\frac{\partial f_{i}(\alpha
)}{\partial\alpha}\\[0.3em]%
\frac{1}{2}\sum_{i=1}^{n}\frac{h_{i}}{(c_{1}+c_{2}h_{i})^{2}} & \frac{1}%
{2}\sum_{i=1}^{n}\frac{h_{i}^{2}}{(c_{1}+c_{2}h_{i})^{2}} & \frac{1}{2}%
\sum_{i=1}^{n}\frac{h_{i}}{(c_{1}+c_{2}h_{i})f_{i}(\alpha)}\frac{\partial
f_{i}(\alpha)}{\partial\alpha}\\[0.3em]%
\frac{1}{2}\sum_{i=1}^{n}\frac{1}{(c_{1}+c_{2}h_{i})f_{i}(\alpha)}%
\frac{\partial f_{i}(\alpha)}{\partial\alpha} & \frac{1}{2}\sum_{i=1}^{n}%
\frac{h_{i}}{(c_{1}+c_{2}h_{i})f_{i}(\alpha)}\frac{\partial f_{i}(\alpha
)}{\partial\alpha} & \frac{1}{2}\sum_{i=1}^{n}\frac{1}{f_{i}^{2}(\alpha
)}\left(  \frac{\partial f_{i}(\alpha)}{\partial\alpha}\right)  ^{2}%
\end{bmatrix}
\end{align*}
and
\begin{align*}
\boldsymbol{d}(c_{1},c_{2},\alpha)  &  =\left[  d_{1}(c_{1},c_{2}%
,\alpha),\ldots,d_{n}(c_{1},c_{2},\alpha)\right]  ^{\top}\\
&  =\left[  ((c_{1}+c_{2}h_{1})f_{1}(\alpha))^{-1},\ldots,((c_{1}+c_{2}%
h_{n})f_{n}(\alpha))^{-1}\right]  ^{\top}.
\end{align*}

\subsection{Diagonal covariance with prescribed decay by 2 parameters}

\label{sec:2-param-covariance}

We may consider a more specific model for diagonal elements with two
parameters: $d_{i}=(cf_{i}(\alpha))^{-1}$, i.e. $\tau_{i}=cf_{i}(\alpha)$,
$i=1,\ldots,n$, where $c$ and $\alpha$ are unknown parameters. Maximum
likelihood estimators for $c$ and $\alpha$ can be computed similarly as in the
previous case. The estimating equations have the form
\begin{align*}
\frac{1}{c}  &  =\frac{1}{n}\sum_{i=1}^{n}\frac{1}{N}S_{i}^{2}f_{i}(\alpha)\\
\frac{1}{c}\sum_{i=1}^{n}\frac{1}{f_{i}(\alpha)}\frac{\partial f_{i}(\alpha
)}{\partial\alpha}  &  =\sum_{i=1}^{n}\frac{1}{N}S_{i}^{2}\frac{\partial
f_{i}(\alpha)}{\partial\alpha},
\end{align*}
which can be rearranged to
\begin{align}
\frac{1}{c}  &  =\frac{1}{n}\sum_{i=1}^{n}\frac{1}{N}S_{i}^{2}f_{i}%
(\alpha)\label{rceC}\\
0  &  =\sum_{i=1}^{n}S_{i}^{2}f_{i}(\alpha)\left(  \frac{1}{f_{i}(\alpha
)}\frac{\partial f_{i}(\alpha)}{\partial\alpha}-\frac{1}{n}\sum_{j=1}^{n}%
\frac{1}{f_{j}(\alpha)}\frac{\partial f_{j}(\alpha)}{\partial\alpha}\right)  .
\label{rceAlfa2}%
\end{align}
Equation (\ref{rceAlfa2}) is an implicit formula for estimating $\alpha$. Its
result can be used for estimating $c$ through (\ref{rceC}). The maximum
likelihood estimator for $D$ is then given by
\begin{equation}
\hat{D}^{(2)}=\diag\left(  (\hat{c}f_{1}(\hat{\alpha}))^{-1},\ldots,(\hat
{c}f_{n}(\hat{\alpha}))^{-1} \right)  , \label{D02}%
\end{equation}
where $\hat{c}$ and $\hat{\alpha}$ are MLEs of $c$ and $\alpha$. It
corresponds to searching a maximum likelihood estimator of $D$ in the subspace
$\Theta_{2}\subset\Theta_{3}\subset\Theta_{1}\subset\Theta$ formed by diagonal
matrices $\diag\left\{  (cf_{i}(\alpha))^{-1},i=1,\ldots,n \right\}  $. Of
course, the estimator $\hat{D}^{(2)}$ does not have ``larger" variance than
$\hat{D}^{(3)}$.

The covariance of the asymptotic distribution of the parameters $d_{1}%
,\ldots,d_{n}$ is
\begin{equation}
\frac{1}{N}Q_{D^{\left(  2\right)  }}=\frac{1}{N}\nabla\boldsymbol{d}%
(c,\alpha)J_{c,\alpha}^{-1}\nabla\boldsymbol{d}(c,\alpha)^{\top},
\label{J_c_alfa_2}%
\end{equation}
from (\ref{eq:J-inv-proj}), where Fisher information matrix at $D=\diag\{d_{i}%
(c,\alpha),i=1,\ldots,n\}$ is the $2\times2$ matrix
\[
J_{c,\alpha}=%
\begin{bmatrix}
\frac{n}{2c^{2}} & \frac{1}{2c}\sum_{i=1}^{n}\frac{1}{f_{i}(\alpha)}%
\frac{\partial f_{i}(\alpha)}{\partial\alpha}\\[0.3em]%
\frac{1}{2c}\sum_{i=1}^{n}\frac{1}{f_{i}(\alpha)}\frac{\partial f_{i}(\alpha
)}{\partial\alpha} & \frac{1}{2}\sum_{i=1}^{n}\frac{1}{f_{i}^{2}(\alpha
)}\left(  \frac{\partial f_{i}(\alpha)}{\partial\alpha}\right)  ^{2}%
\end{bmatrix}
\]
and $\boldsymbol{d}(c,\alpha)=\left[  d_{1}(c,\alpha),\ldots,d_{n}%
(c,\alpha)\right]  ^{\top}=\left[  (cf_{1}(\alpha))^{-1},\ldots,(cf_{n}%
(\alpha))^{-1}\right]  ^{\top}.$

\subsection{Sparse inverse covariance and GMRF}

\label{sec:GMRF} In the GMRF\ method for fields on a rectangular mesh, we
assume that a variable on a gridpoint is conditionally independent on the rest
of the gridpoints, given values on neighboring gridpoints. It follows that
nonzero entries in the inverse of the covariance matrix can be only between
neighbor gridpoints. We start with 4 neighbors (up, down, right, left), and
adding neighbors gives rise to a sequence of nested covariance models. If the
columns of the mesh are stacked vertically, their inverse covariance matrix
will have a band-diagonal structure.

The inverse covariance model fitted by MLE was introduced by
\cite{Ueno-2009-CRI} and applied on data from oceanography. The corresponding
Fisher information matrix may be found as the negative of the Hessian matrix
\cite[eq.~(C17)]{Ueno-2009-CRI}.

\begin{figure}[h]%
\begin{tabular}
[c]{ccc}%
\includegraphics[width=.3\textwidth]{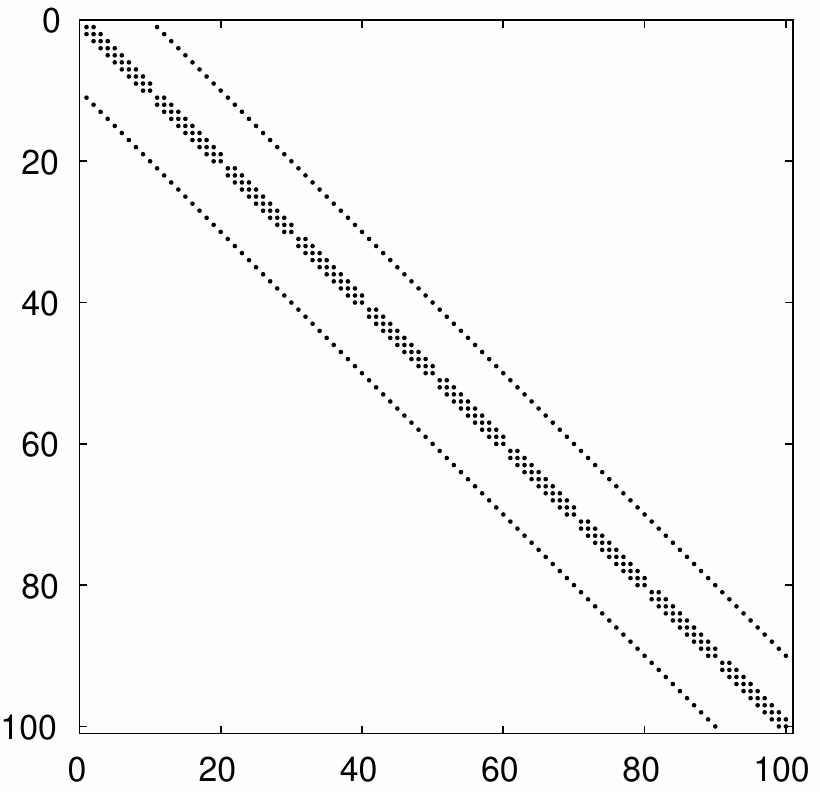} &
\includegraphics[width=.3\textwidth]{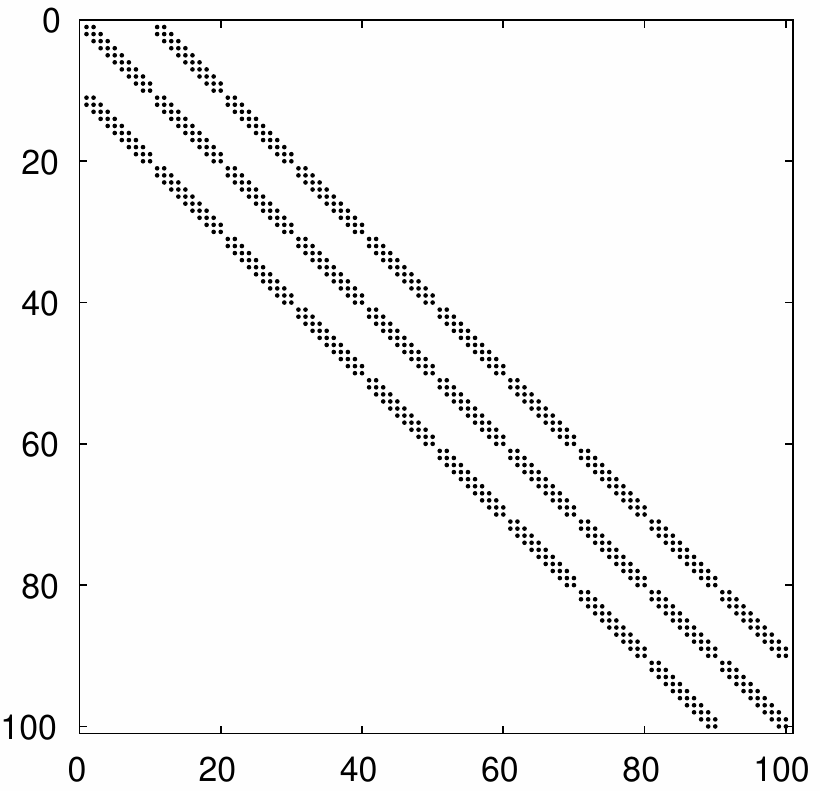} &
\includegraphics[width=.3\textwidth]{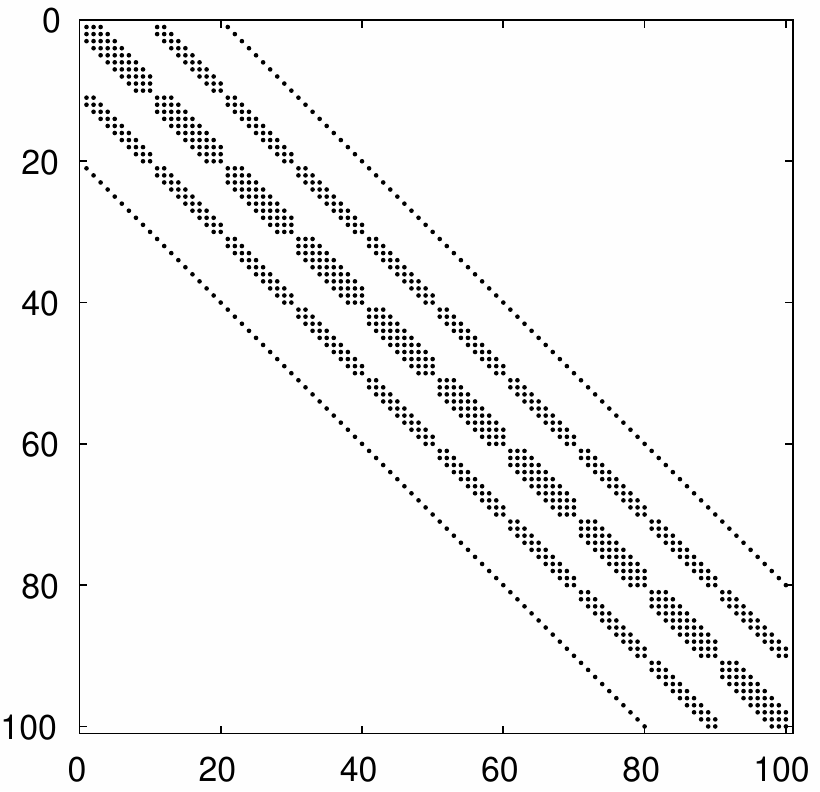}\\
\includegraphics[width=.08\textwidth]{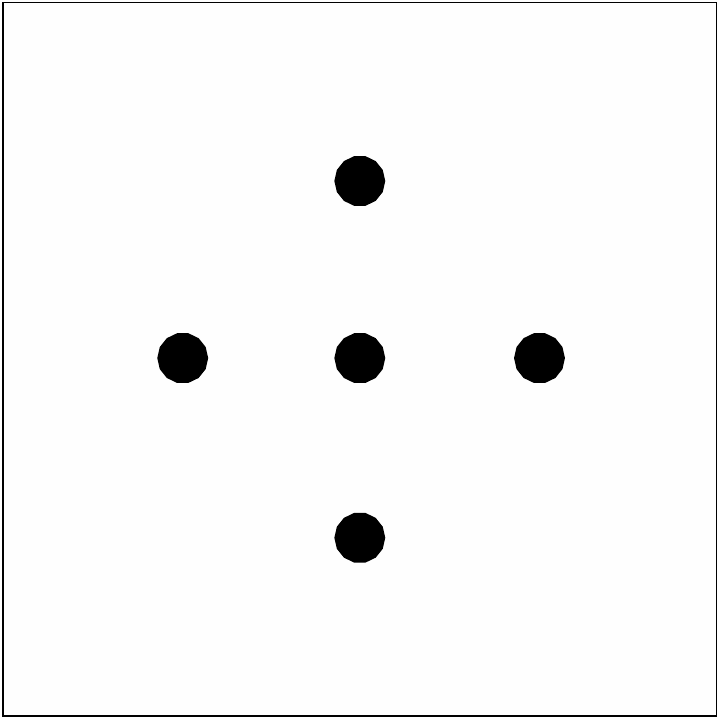} &
\includegraphics[width=.08\textwidth]{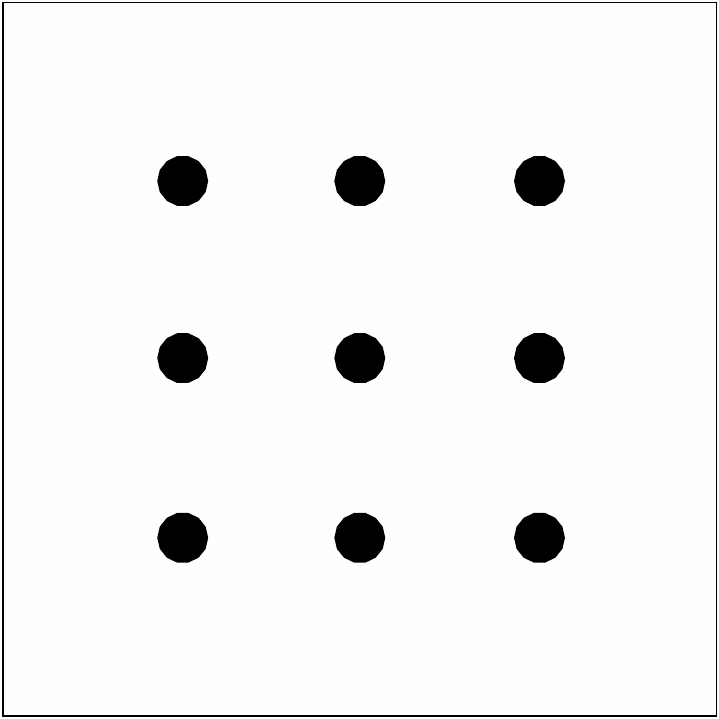} &
\includegraphics[width=.08\textwidth]{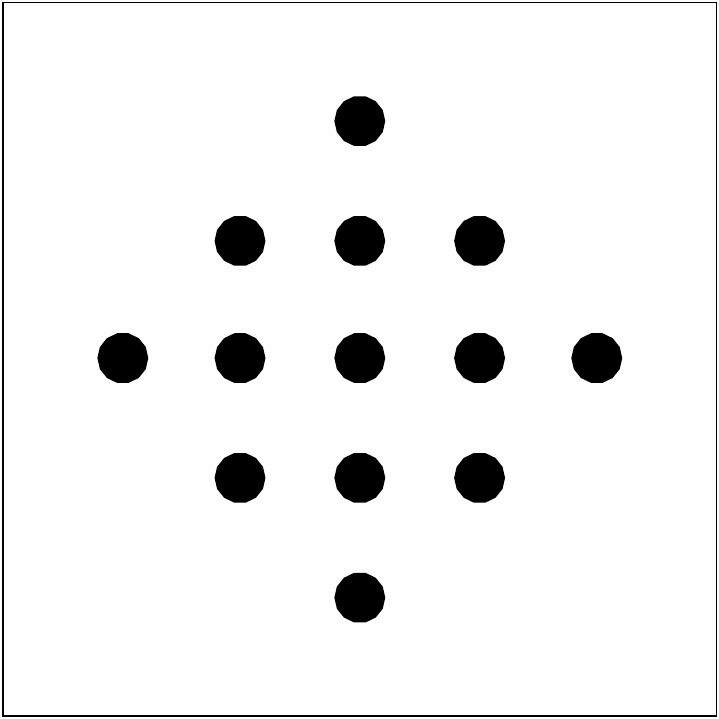}
\end{tabular}
\caption{Block band-diagonal structure of inverse covariance matrix. 10
columns of dimension 10, stacked vertically. 4, 8, 12 neighbors of any
gridpoint. }%
\label{fig:structure}%
\end{figure}

\section{COMPUTATIONAL STUDY}

\label{sec:comp}

In Section \ref{sec:nested}, we have shown that in the sense of asymptotic
variance and second moment (mean-squared) error, the maximum likelihood
estimator computed in a smaller space containing the true parameter is more
(or equally) precise. For small samples, we illustrate this behavior by means
of simulations.


\subsection{Simulation of simple GMRF}

\label{sec:GMRF-sim} We first show that in the case of GMRF with four
neighbors per gridpoint, adding dependencies (parameters) which are not
present brings a loss of precision of the MLE. Using the sample covariance in
this case causes a substantial error.

We have generated an ensemble of realizations of a GMRF with dimensions
$10\times10$ (resulting in $n=100$) and inverse covariance structure as in
Fig.~\ref{fig:structure}. The values on the diagonals of the covariance matrix
have been set to constant, since we assume the correlation with left and right
neighbor to be identical, as well as the correlation with upper and lower
neighbor (by symmetry of the covariance matrix and isotropy in both directions
of the field, but different correlation in each direction). This leads to a
model with 3 parameters for 4 neighbors, 5 parameters for 8 neighbors and 7
parameters for 12 neighbors,

The covariance structure of $\Sigma^{-1}$ with 4 neighbors was set as
\textquotedblleft truth\textquotedblright\ and random samples were generated
from $\mathcal{N}_{n}(\boldsymbol{0},\Sigma)$ with sample sizes
$N=10,15,20,\ldots,55$. The values on first, second and tenth diagonal have
been set as 5, -0.2 and 0.5. For each sample, we computed successively the MLE
with 3, 5 and 7 unknown parameters numerically by Newton's method, as
described in \cite{Ueno-2009-CRI}.

The difference of each estimator from the true matrix $\Sigma$ was measured in
the Frobenius norm, which is the same as the Euclidean norm of a matrix
written as one long vector. In order to reduce the sampling error, 50
simulations of the same size were generated and the mean of squared Frobenius
norm was computed. The results can be found in Fig.~\ref{fig:errplot1}.


\begin{figure}[h]
\begin{center}
\includegraphics[height=3in]{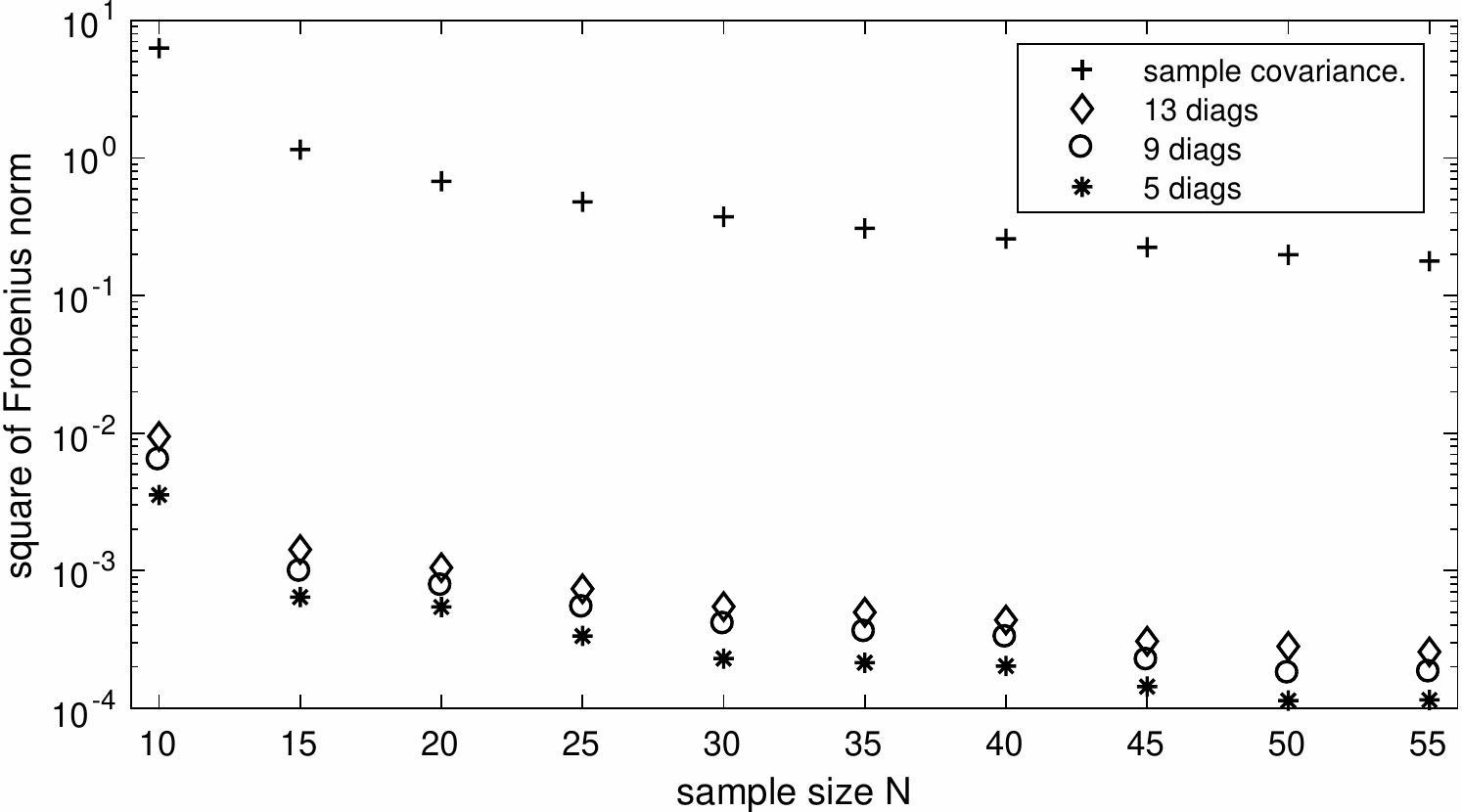}
\end{center}
\caption{Error of the MLE in Frobenius norm for sample covariance and models
with 4, 8, 12 neighbors, i.e. 5, 9, 13 nonzero diagonals in the inverse
covariance matrix. }%
\label{fig:errplot1}%
\end{figure}

As expected, the MLE with 3 parameters outperforms the estimates with 5 and 7
parameters and the Frobenius norm for sample covariance stays one order worse
than all parametric estimates.

\subsection{Simulation of fields with diagonal covariance}

\label{sec:diag-sim}

The simulation for spectral diagonal covariance was carried out in a similar
way. First, a diagonal matrix $D$ was prepared, whose diagonal entries decay
according to the model $d_{i}=\frac{1}{c}e^{\alpha\lambda_{i}},i=1,\ldots,n$,
where $c$ and $\alpha$ are parameters and $\lambda_{i}$ are the eigenvalues of
Laplace operator in two dimensions on $10\times10$ nodes (so again $n=100$).
Such models are useful in modeling smooth random fields, e.g., in meteorology.
Then, random samples were generated from $\mathcal{N}_{n}(\boldsymbol{0},D)$
with sample sizes $N=5,\ldots,20$. For each sample, four covariance matrix
estimators were computed:

\begin{itemize}
\item sample covariance matrix $\hat{\Sigma}_{N}$, cf.
(\ref{eq:sample-covariance})

\item diagonal part $\hat{D}^{\left(  0\right)  }$ of the sample covariance
matrix, cf. (\ref{eq:diag-sample-covariance})

\item MLE $\hat{D}^{\left(  1\right)  }$ in the space of diagonal matrices,
cf. (\ref{D1})

\item MLE $\hat{D}^{\left(  3\right)  }=\diag\{(\hat{c}_{1}-\hat{c}_{2}%
\lambda_{i})^{-1}e^{\hat{\alpha}\lambda_{i}},i=1,\ldots,n\}$ with 3 parameters
$c_{1},c_{2}$ and $\alpha$, cf. (\ref{D03}).

\item MLE $\hat{D}^{\left(  2\right)  }=\diag\{\hat{c}^{-1}e^{\hat{\alpha
}\lambda_{i}},i=1,\ldots,n\}$ with 2 parameters $c$ and $\alpha$, cf.
(\ref{D02}).
\end{itemize}

Let us briefly discuss the choice of the covariance model $d_{i}=\frac{1}%
{c}e^{\alpha\lambda_{i}}$. We decided to carry out the simulation with a
second-order stationary random field, whose covariance can be diagonalized by
the Fourier transform. This transform is formed by the eigenvectors of the
Laplace operator. Hence, it is reasonable to model the diagonal terms of this
covariance matrix (i.e. the covariance eigenvalues) by some function of
eigenvalues of the Laplace operator. This function needs to have a
sufficiently fast decay in order to fulfil the necessary condition for the
proper covariance (the so-called trace class property, e.g.,
\cite{Kuo-1975-GMB}). Exponential decay is used, e.g., in
\cite{Mirouze-2010-RCF}. Another possible choice of a covariance model is a
power model, where the eigenvalues of the covariance are assumed to be a
negative power of $-\lambda_{i},i=1,\ldots,n$, e.g., \cite{Berner-2009-SSK,
Gaspari-2006-CAC,Simpson-2012-TCM}.

The difference of each estimator from the true matrix $D$ was measured in the
Frobenius norm again. To reduce the sampling noise, 50 replications have been
done for each sample size and the mean of squared Frobenius norm can be found
in Fig.~\ref{fig:comparison}.

\begin{figure}[h]
\centering
\includegraphics[height=3in]{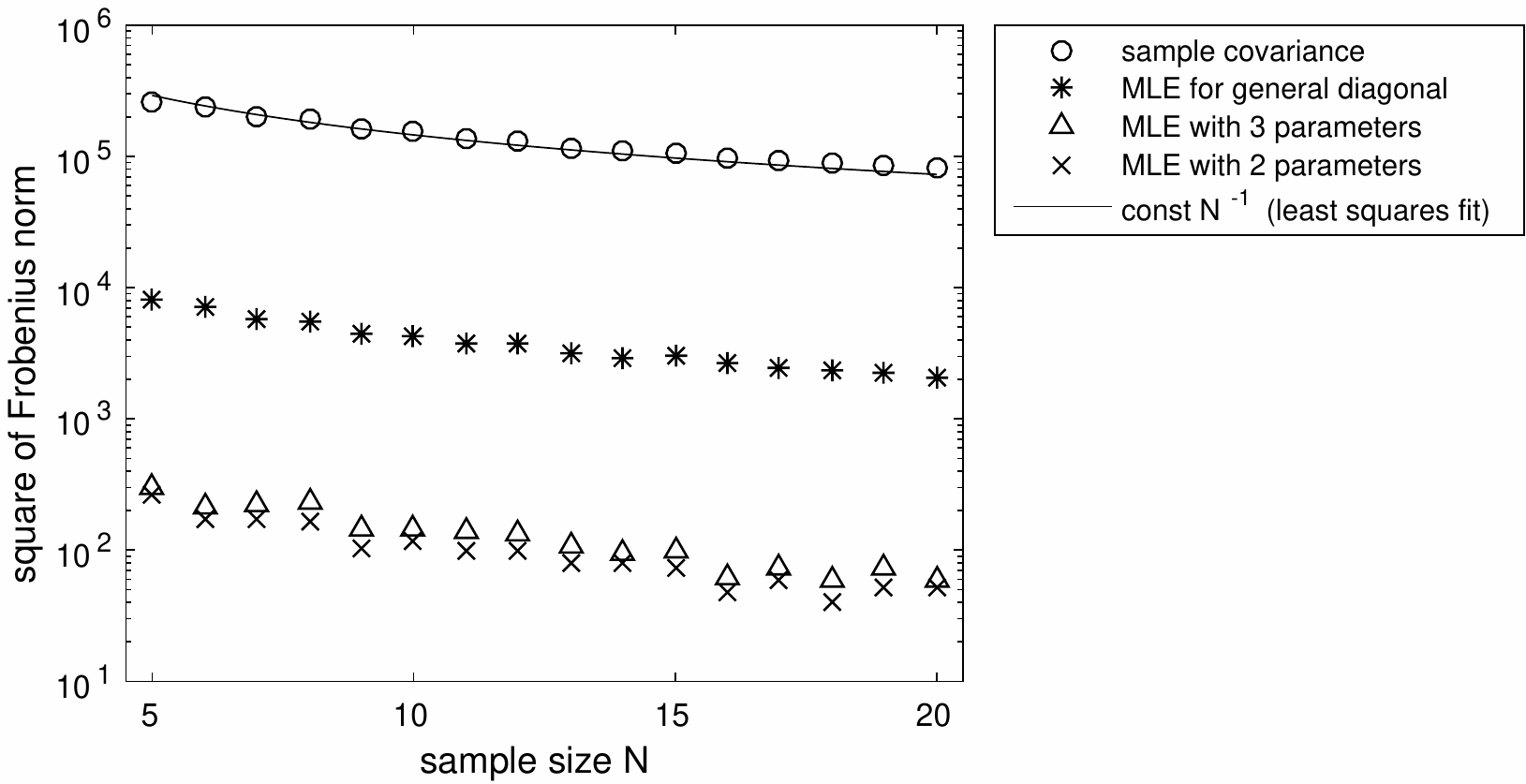} 
\caption{Comparison
of the error matrix $\hat{D}-D$ in the Frobenius norm. The field had dimension
$n=10\times10$. Exponential decay of eigenvalues (i.e. $\tau_{i}
=ce^{\alpha\lambda_{i}},i=1,\ldots,n$ ) was used with parameters $c=1/30$ and
$\alpha=0.002$. The full line is the order of convergence $\mathrm{const}%
(N^{-1})$ fitted to the error of the sample covariance. }%
\label{fig:comparison}%
\end{figure}

For the diagonal MLE, given by (\ref{D1}), (\ref{D03}), and (\ref{D02}), we
can expect from (\ref{eq:2-norm-comparison}) that these estimators should
satisfy asymptotically
\begin{equation}
\E\left(  \left\vert \hat{D}_{N}^{\left(  k\right)  }-D\right\vert _{F}%
^{2}\right)  \approx\frac{1}{N}\operatorname*{Tr}(J_{D^{\left(  k\right)  }%
}^{-1}),\quad k=1,2,3, \label{eq:simulation-mean-square}%
\end{equation}
even if convergence in distribution does not imply convergence of moments
without additional assumptions. This conjecture can be supported by a
comparison of Figures \ref{fig:com_Frob} and \ref{fig:com_trace}, where we
observe the same decay. From the nesting, we know that%
\begin{equation}
\operatorname*{Tr}(J_{D^{\left(  2\right)  }}^{-1})\leq\operatorname*{Tr}%
(J_{D^{\left(  3\right)  }}^{-1})\leq\operatorname*{Tr}(J_{D^{\left(
1\right)  }}^{-1}) \label{eq:simulation-trace-comparison}%
\end{equation}
and we can expect that the Frobenius norm should decrease for more restrictive
models, that is,%
\begin{equation}
\E\left\vert \hat{D}_{N}^{\left(  2\right)  }-D\right\vert _{F}^{2}%
\leq\E\left\vert \hat{D}_{N}^{\left(  3\right)  }-D\right\vert _{F}^{2}%
\leq\E\left\vert \hat{D}_{N}^{\left(  1\right)  }-D\right\vert _{F}^{2},
\label{eq:mean-squared-comp}%
\end{equation}
which is confirmed by the simulations (see Figure \ref{fig:com_trace}, resp.
\ref{fig:com_Frob}).

The comparisons (\ref{eq:mean-squared-comp}) of the Frobenius norm of the
error in the mean squared complement the pointwise comparison
(\ref{eq:pointwise}) between the sample covariance and its diagonal. Relying
on MLE\ for that comparison is not practical, because the sample size of
interest here is $N<n$, and, consequently, $\hat{\Sigma}_{N}$ is singular and
cannot be cast as MLE\ with an accompanying Fisher information matrix, cf.
Remark \ref{rem:singular}. But it is evident that for small sample sizes,
estimators computed in the proper subspace perform better. Hence, the
hierarchical order seems to hold even when $N<n$.

\begin{figure}[ptb]
\centering
\includegraphics[height=3in]{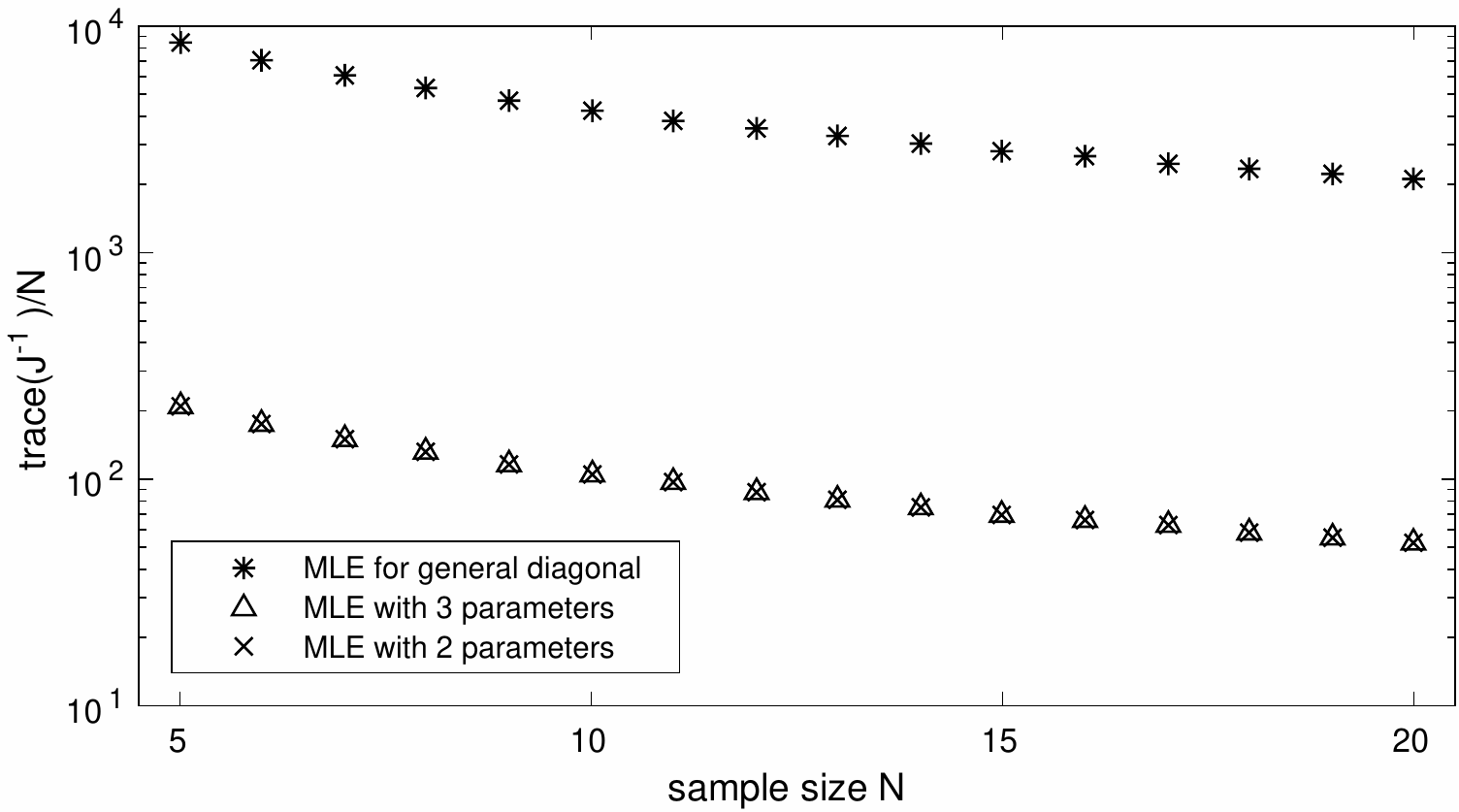}
\caption{$\frac{1}%
{N}\operatorname*{Tr}(J^{-1}_{D})$ for the three parameterizations. }%
\label{fig:com_trace}%
\end{figure}

\begin{figure}[ptb]
\centering
\includegraphics[height=3in]{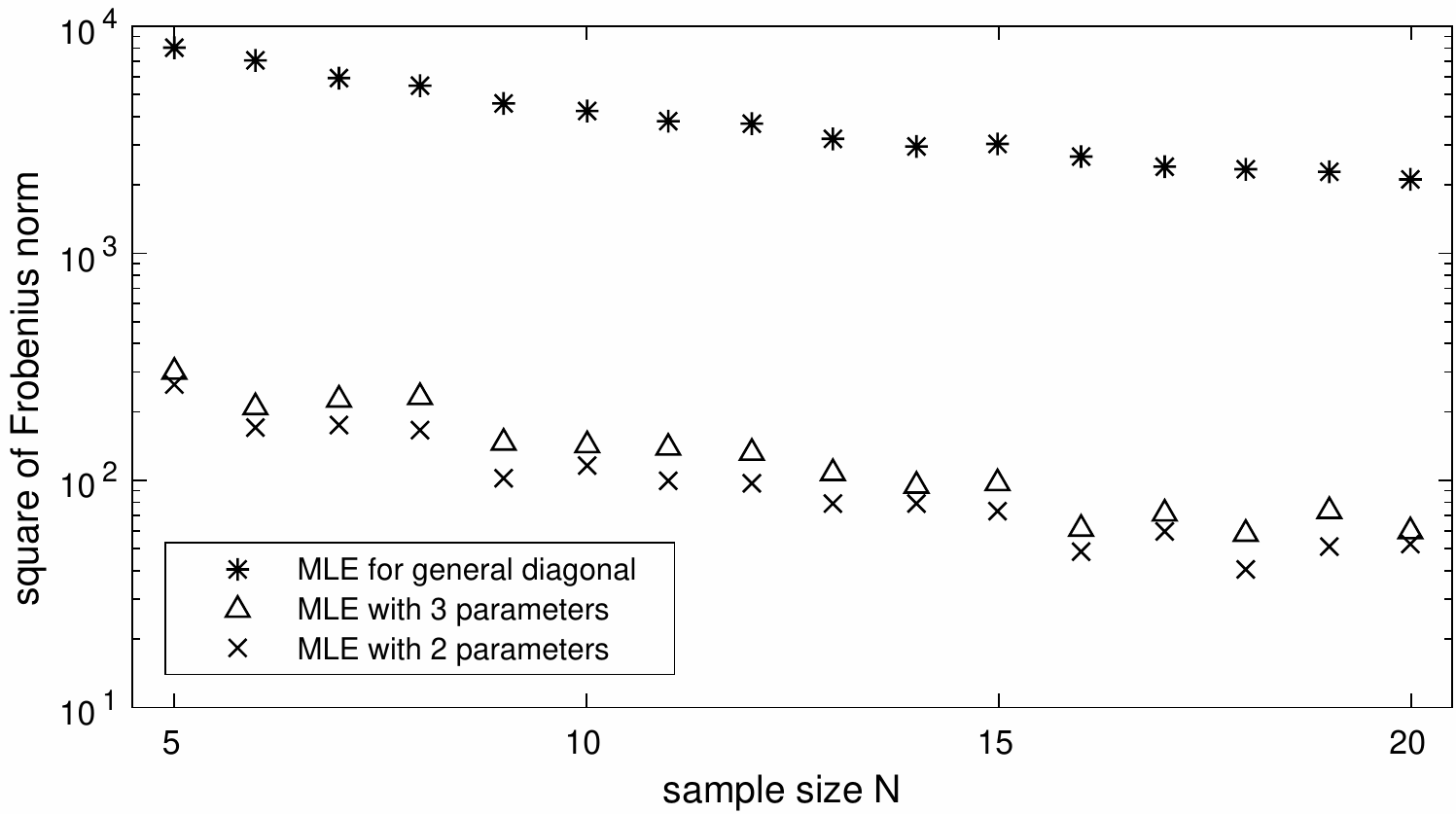}%
\caption{ Mean of $|
\hat{D}_{N}^{\left(  k\right)  }-D|_{F}^{2} $ based on 50 replications}%
\label{fig:com_Frob}%
\end{figure}

\section{COMPARISON WITH REGULARIZATION METHODS}

\label{sec:shrinkage}

In the previous sections, we pointed out the advantages of using
low-parametric models for estimating a covariance matrix using a small sample.
As mentioned in the Introduction, there is another large class of estimating
methods for high-dimensional covariance matrices: shrinkage estimators. The
principle of these methods is to move the sample covariance towards a target
matrix that possesses some desired properties (e.g., full rank, proper
structure). This can be seen as a convex combination of the sample covariance
matrix $\hat{\Sigma}_{N}$ and the so called target matrix $T$:
\begin{equation}
\hat{\Sigma}_{S}=\gamma\hat{\Sigma}_{N}+(1-\gamma)T,\hspace{5mm}\text{ for
}\gamma\in\lbrack0,1]. \label{general-shrink}%
\end{equation}

One of the simplest shrinkage estimators has the form of (\ref{general-shrink}%
) with the target matrix equal to identity, which results in shrinking all
sample eigenvalues with the same intensity towards their mean value.
\cite{Ledoit-2004-WEL} derived the optimal shrinkage parameter $\gamma$ to
minimize the squared Frobenius loss
\begin{equation}
\min_{\gamma}\E||\hat{\Sigma}_{S}-D||_{F}^{2}.
\end{equation}
The comparison of this estimator with the maximum likelihood estimator
$\hat{D}^{(2)}$ was accomplished by a simulation with identical setting as in
Section \ref{sec:comp}. The results are shown in Fig.~\ref{fig:com_shrink}.
For reference, the sample covariance $\hat{\Sigma}_{N}$ and its diagonal
$\hat{D}^{(0)}$ are also added.

Another regularization method is described in
\cite{Won-2013-CCE}. They consider a type of covariance estimator, where the
regularization effect is achieved by bounding the condition number of the
estimate by a regularization parameter $\kappa_{max}$. Since the condition
number is defined as a ratio of the largest and smallest eigenvalue, this
method corrects for overestimation of the largest eigenvalues and
underestimation of the small eigenvalues simultaneously. The resulting
estimator is called a condition-number-regularized covariance estimator and it
is formulated as the maximum likelihood estimator restricted on the subspace
of matrices with condition number bounded by $\kappa_{max}$, i.e.
\begin{equation}
\max_{\Sigma}\ell(\Sigma)\hspace{4mm}\text{ subject to }\frac{\lambda
_{max}(\Sigma)}{\lambda_{min}(\Sigma)}\leq\kappa_{max},
\end{equation}
where $\lambda_{max}(\Sigma)$, resp. $\lambda_{min}(\Sigma)$, is the largest,
resp. the smallest, eigenvalue of the covariance matrix $\Sigma$. An optimal
$\kappa_{max}$ is selected by maximization of the expected likelihood, which
is approximated by using $K$-fold cross-validation. The authors proved that
$\kappa_{max}$ selected in this way is a consistent estimator for the true
condition number (i.e. the condition number of $D$). Therefore, the idea of
this method is to search a MLE in a subspace defined by covariance matrices
with condition number smaller or equal to the true condition number. The form
of the resulting covariance estimator together with the details of the
computational process is provided in \cite{Won-2013-CCE}. In
Fig.~\ref{fig:com_shrink}, we can see the performance of this estimator
(denoted as \texttt{cond-num-regularization}) in comparison of other methods.

\begin{figure}[ptb]
\centering
\includegraphics[height=3in]{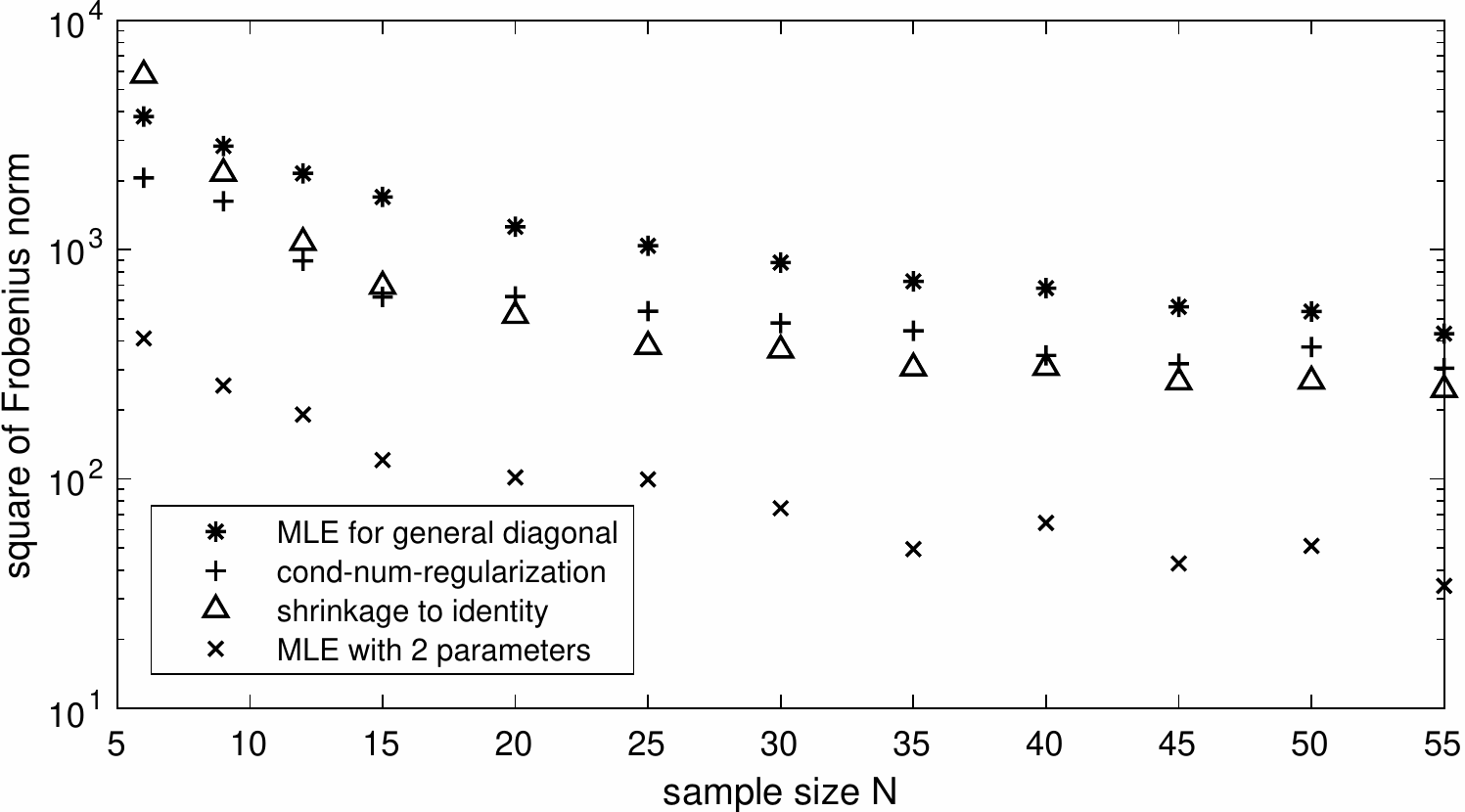}
\caption{
Comparison of regularization estimators with maximum likelihood estimators.
The error matrices $\hat{\Sigma}-D$ are compared in the Frobenius norm. The
simulation setting was identical with the Section \ref{sec:comp}.}%
\label{fig:com_shrink}%
\end{figure}

The shrinkage estimator $\hat{\Sigma}_{S}$ and the
condition-number-regularized estimator result in non-diagonal matrices, which
in our case predetermines them to perform worse than the diagonal estimator
$\hat{D}^{(0)}$. However, we have to note that performance of these methods
strongly depends on the particular form of the true covariance matrix $D$. In
the case when the decrease of the true eigenvalues is less rapid, both methods
may provide better results than the diagonal of sample covariance. The
performance of $\hat{\Sigma}_{S}$ could be possibly improved by choosing a
different target matrix that is closer to reality but such a study is out of
the scope of this paper.

It is seen from Fig.~\ref{fig:com_shrink} that the
condition-number-regularized estimator provides more precise estimates than
the sample covariance $\hat{\Sigma}_{N}$, as expected. This is in accordance
with the preceding theory and empirical findings about the higher precision of
estimators from a smaller parametric subspace (the corresponding parametric
subspace consists of matrices with the condition number smaller or equal to
$\kappa_{max}$). If, however, the theoretical condition number is very large
as in our case, the method has a problem in estimating this number and its
performance is limited.

Both regularization estimators perform well against sample covariance, but the
setting of our simulation is less favourable for them. Neither of them can
compete with the maximum likelihood estimator found in the true small subspace
of diagonal matrices with proper decay.

\section{CONCLUSIONS}

Our main aim was to point out the significant advantage resulting from
computing the MLE of the covariance matrix in a proper parameter subspace,
especially in the high-dimensional setting, when the available sample has
small size relative to the dimension of the problem. This subspace can be
formed, e.g., by a parametric model for covariance eigenvalues or for a
diagonal matrix resulting from a suitable set of transformations.

We provided theoretical results on asymptotic comparison of covariance
matrices of each estimator for multivariate normal distribution, where we can
lean on the well-developed maximum likelihood theory. The situation for small
samples was illustrated by means of a simulation. We consider two-parametric
models for the covariance eigenvalues based on the eigenvalues of Laplace
operator. In practice, the proper model/subspace can be inferred from
historical data.

Using a properly specified model, one can reach a significant improvement in
performance, which can have a positive impact on the subsequent tasks like
data assimilation and prediction.

\section*{ACKNOWLEDGEMENTS}

This work was partially supported by the the Czech Science Foundation (GACR)
under grant 13-34856S and by the U.S. National Science Foundation under 
grants DMS-1216481 and ICER-1664175.

\bibliographystyle{apacite}
\bibliography{references,../../references/geo,../../references/other}

\end{document}